\DeclareMathOperator{\hypk}{Kl_2}
\DeclareMathOperator{\Mod}{mod}
\newcommand{\Z}{\mathbb{Z}}
\newcommand{\Q}{\mathbb{Q}}
\renewcommand{\pmod}[1]{\,(\Mod{#1})}
\newcommand{\twosum}[2]{\sum_{\substack{#1\\#2}}}
\def\sumstar{\sideset{}{^\star}\sum}
\newtheorem{thm}{Theorem}[section]
\newtheorem{lem}[thm]{Lemma}
\begin{document}
\title{The divisor function in arithmetic progressions to smooth moduli}
\author{A.J. Irving\\
Mathematical Institute, Oxford}
\date{}
\maketitle

\begin{abstract}
By using the $q$-analogue of van der Corput's method we study the divisor function in an arithmetic progression to modulus $q$.  We show that the expected asymptotic formula holds for a larger range of $q$ than was previously known, provided that $q$ has a certain factorisation.
\end{abstract}

\section{Introduction}

Given an arithmetic function $f(n)$ it is natural to consider the sum 
$$\twosum{n\leq x}{n\equiv a\pmod q}f(n).$$
For many functions $f$ we might hope to show that when $(a,q)=1$ this is asymptotic to 
$$\frac{1}{\varphi(q)}\twosum{n\leq x}{(n,q)=1}f(n).$$
In applications it is often essential that we establish such a result uniformly in $q\leq x^\theta$ with $\theta$ as large as possible.  

In this paper we will consider the divisor  function $\tau(n)$, which counts the number of positive divisors of $n$.  We therefore let 
$$D(x,q,a)=\twosum{n\leq x}{n\equiv a\pmod q}\tau(n)$$
and 
$$D(x,q)=\frac{1}{\varphi(q)}\twosum{n\leq x}{(n,q)=1}\tau(n).$$
We then wish to estimate $E(x,q,a)=D(x,q,a)-D(x,q)$.  We hope to show that for some $\delta>0$ we have the bound 
\begin{equation}\label{requiredbound}
E(x,q,a)\ll \frac{x^{1-\delta}}{q}.
\end{equation}

If $q\leq x^{2/3-\eta}$ for some $\eta>0$ then (\ref{requiredbound}) holds with a $\delta$ depending on $\eta$.  This was proved independently in unpublished work of Hooley, Linnik and Selberg, it is a consequence of the Weil bound for Kloosterman sums.  For larger $q$, no nontrivial bound is known for individual $E(x,q,a)$ but there are various results on average.  For example Fouvry \cite[Corollaire 5]{fouvrytitchmarsh} showed that for any $\eta,A>0$ and any $a\in \Z$ we have 
$$\twosum{x^{2/3+\eta}\leq q\leq x^{1-\eta}}{(q,a)=1}|E(x,q,a)|\ll_{A,a,\eta} x(\log x)^{-A}.$$
An average over moduli $x^{2/3-\eta}\leq q\leq x^{2/3+\eta}$ was considered by Fouvry and Iwaniec in \cite{fouiwdivisor}.  Their approach requires them to work only with moduli $q$ which have a squarefree factor $r$ of a certain size.  Specifically, they show that if $r$ is squarefree with $r\leq x^{3/8}$ and $(r,a)=1$ then for any $\eta>0$ we have 
$$\twosum{rs^2\leq x^{1-6\eta}}{(s,ar)=1}|E(x,rs,a)|\ll_\eta r^{-1}x^{1-\eta}.$$
Observe that to handle moduli $q=rs$ of size $x^{2/3}$ with this result it is necessary that $r\geq x^{1/3+6\eta}$.  Further results are possible if we exploit averaging over the residue class $a\pmod q$.  See for example Banks, Heath-Brown and Shparlinski \cite{bhbshpar} and Blomer \cite{blomer}.

We will show that (\ref{requiredbound}) holds for an individual $E(x,q,a)$ for $q$ almost as large as $x^{\frac{55}{82}}$ provided that $q$ factorises in a certain way.  This will follow by optimising the sizes of the parameters in the following result.

\begin{thm}\label{divisorthm}
Suppose that $q=q_0q_1q_2q_3$ is squarefree and $(a,q)=1$.  For any $x\geq q$, $\delta\in (0,\frac{1}{12})$ and any $\epsilon>0$ we have
$$E(x,q,a)\ll_\epsilon q^{-1}x^{1-\delta+\epsilon}+x^{2\delta+\epsilon}
\left(\sum_{j=1}^3x^{2^{-j-1}}q^{1/2-2^{-j}}q_{4-j}^{2^{-j}}+x^{1/16}q^{3/8}q_0^{1/16}+q^{1/2}q_0^{-1/16}\right).$$ 
\end{thm}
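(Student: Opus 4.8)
The plan is to begin with Dirichlet's hyperbola method. Writing $\tau(n)=\sum_{de=n}1$ and cutting the smaller factor at $\sqrt x$, for each $d\le\sqrt x$ with $(d,q)=1$ I count the $e\le x/d$ with $e\equiv a\bar d\pmod q$; the contribution with $d$ and $e$ interchanged is identical by symmetry. Approximating each inner count by $x/(dq)$ produces the main term $D(x,q)$, and the identity $\lfloor t\rfloor=t-\tfrac12-\psi(t)$ shows that, up to negligible terms, $E(x,q,a)$ is a sum of sawtooth contributions
$$\twosum{d\sim D}{(d,q)=1}\psi\!\left(\frac{x/d-a\bar d}{q}\right)$$
over dyadic blocks $D\le\sqrt x$. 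Expanding $\psi$ in a Fourier series truncated at some height $H$ turns each block into a family of incomplete exponential sums $\sum_{d\sim D}e(hx/(dq))\,e(-ha\bar d/q)$; the arithmetically essential ingredient is the incomplete Kloosterman-type sum in $\bar d\pmod q$, the smooth factor $e(hx/(dq))$ being removed by partial summation. The parameter $\delta$ fixes a threshold: ranges in which the relevant sum is short are estimated trivially and contribute $q^{-1}x^{1-\delta+\epsilon}$, while the factor $x^{2\delta+\epsilon}$ multiplying the remaining terms collects the frequency ranges and harmonic weights introduced by the Fourier truncation and by the completion step below.

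The engine for the surviving sums is a $q$-analogue of van der Corput's $A$-process, driven by the factorisation $q=q_0q_1q_2q_3$. Given a smoothly weighted incomplete sum of $e(a\bar d/q)$, I would pick one factor $r\mid q$, difference the sum by shifts that are multiples of $r$, and apply Cauchy--Schwarz; the diagonal is bounded trivially, while the off-diagonal phase difference $a(\overline{d+sr}-\bar d)/q\equiv -as\bar d\,\overline{d+sr}/(q/r)$ is supported modulo $q/r$, so the modulus of the inner sum drops from $q$ to $q/r$. Performing this step in turn with $r=q_3$, then $q_2$, then $q_1$ squares the sum and halves the exponent of its length at each stage, so the initial length $x^{1/2}$ passes through $x^{1/4},x^{1/8},x^{1/16}$; these are exactly the factors $x^{2^{-j-1}}$ for $j=1,2,3$, and the accompanying modulus weights $q^{1/2-2^{-j}}q_{4-j}^{2^{-j}}$ record the effect of having consumed $j$ of the four factors while the remaining $q^{1/2}$-type saving is deferred to the final completion.

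The last factor $q_0$ is reserved for a completion (a $B$-process): after the three differencing steps the residual sum, now of length about $x^{1/16}$ and modulus $q_0$, is opened by Poisson summation and each complete sum $\sumstar_{n\bmod q_0}e\!\left(\frac{hn+a\bar n}{q_0}\right)=\hypk(h,a;q_0)$ is estimated by the Weil bound. The generic frequencies $h\neq0$ then yield $x^{1/16}q^{3/8}q_0^{1/16}$, while the degenerate frequency $h=0$, where the completed sum carries no cancellation, produces $q^{1/2}q_0^{-1/16}$---a term that \emph{improves} as $q_0$ grows, which is why one optimises with $q_0$ large and $q_1,q_2,q_3$ small. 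Assembling the trivial range, the three iterated terms, and these two $q_0$-terms, weighted by $x^{2\delta+\epsilon}$, gives the stated bound.

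The hard part will be the non-degeneracy of the correlation sums generated by the iterated $A$-process. At each differencing step the relevant complete sum is an average of additive characters against ratios of inverses modulo the shrinking modulus, and I must check that it remains a genuine algebraic exponential sum enjoying square-root cancellation under Weil, rather than collapsing to its trivial size for the sparse set of shifts where the phase degenerates; bounding that exceptional set, and using squarefreeness of $q$ to keep the complete sums multiplicative under the Chinese Remainder Theorem after each reduction, is the delicate point. A secondary, purely bookkeeping, difficulty is the simultaneous choice of the three shift-ranges so that the length and modulus exponents line up precisely as in the theorem, which is also what pins down the permissible range $\delta\in(0,\tfrac1{12})$.
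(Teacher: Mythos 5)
Your opening reduction of $E(x,q,a)$ to incomplete Kloosterman sums of length about $\sqrt{x}$ --- hyperbola method, sawtooth $\psi$, truncated Fourier series --- is a workable alternative to the paper's route (the paper instead uses a dyadic plus short-interval box decomposition and completes the congruence $uv\equiv a\pmod q$ with additive characters, treating the gcd $d=(k,q)$ by passing to modulus $q/d$; that is where the constraint $\delta<\tfrac{1}{12}$ and the terms $q^{-1}x^{1-\delta+\epsilon}$, $x^{2\delta+\epsilon}$ actually arise). The fatal gap is in the second half, which is the heart of the theorem. What you describe --- difference the \emph{incomplete} sum of length $N\approx x^{1/2}$ by multiples of $q_3$, then $q_2$, then $q_1$, dropping the modulus from $q$ to $q/r$ at each step, and finish with a single Poisson completion and the Weil bound --- is precisely Heath-Brown's $A^3B$ scheme, and the paper explains why it cannot work here. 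That scheme yields the bound $q^\epsilon\bigl(\sum_{j=1}^{3}N^{1-2^{-j}}q_{4-j}^{2^{-j}}+N^{7/8}q_0^{1/16}+Nq_0^{-1/16}\bigr)$, whose last term $Nq_0^{-1/16}$ can never beat the Weil bound $q^{1/2}$ when $N\approx q^{3/4}$, which is exactly the relevant situation ($N\approx x^{1/2}$, $q\approx x^{2/3}$): one would need $q_0>q^4$. Your assertion that this mechanism nevertheless outputs the terms $x^{2^{-j-1}}q^{1/2-2^{-j}}q_{4-j}^{2^{-j}}$, $x^{1/16}q^{3/8}q_0^{1/16}$ and $q^{1/2}q_0^{-1/16}$ does not follow from the process you set up; those exponents carry the signature of a completion performed \emph{before} any differencing.

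The paper proves Theorem \ref{shortkloost} by a $BA^3B$ process: first complete $S$, so that the differencing acts on sums of complete Kloosterman sums $S(a,k,q)$ over ranges of length $K=[q/N]\approx x^{1/6}$, far shorter than $N$; then apply the $A$-process three times (Lemmas \ref{onediff} and \ref{lem:ldiff}), producing sums of products of $2^3=8$ complete Kloosterman sums to modulus $q_0$ with shifts $\sum_{i\in I}q_ih_i$; then complete again in $k$. At that last step the Weil bound is nowhere near sufficient: one needs square-root cancellation in the complete sum $\sum_{k\pmod p}e_p(-kb)\prod_{I}S(a,k+\sum_{i\in I}h_i,p)$, i.e.\ cancellation among the \emph{products} of Kloosterman sums, which is the deep input of Fouvry--Ganguly--Kowalski--Michel (Lemma \ref{completeexp}), supplemented by a combinatorial lemma showing that the $2^l$ subset sums $\sum_{i\in I}h_i$ cannot all occur with even multiplicity unless some $h_i=0$. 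Without completing first (so that the differenced variable has length $K$ rather than $N$) and without this correlation bound, the claimed estimate is out of reach: this is exactly the distinction the paper draws between its $BA^lB$ bound, useful for $N\approx q^{1-1/(l+1)}$, and Heath-Brown's $A^lB$ bound, useful only for $N\approx q^{1/(l+1)}$.
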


It is not immediately clear when the estimate in this theorem is nontrivial.  We therefore prove the following, in which we exploit the fact that if $q$ is sufficiently smooth then we can find a suitable factorisation for which  our bound is close to optimal.

\begin{thm}\label{divisorsmooth}
Suppose $\varpi,\eta>0$ satisfy 
$$246\varpi+18\eta<1.$$
There exists a $\delta>0$, depending on $\varpi$ and $\eta$, such that for any $x^\eta$-smooth, squarefree $q\leq x^{2/3+\varpi}$ and any $(a,q)=1$ we have 
$$E(x,q,a)\ll_{\varpi,\eta}q^{-1}x^{1-\delta}.$$
\end{thm}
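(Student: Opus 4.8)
The plan is to deduce Theorem \ref{divisorsmooth} from Theorem \ref{divisorthm} by choosing the factorisation $q=q_0q_1q_2q_3$ optimally and reducing the matter to a linear program in the exponents. Write $q=x^Q$ with $Q=\log q/\log x\le 2/3+\varpi$, and seek a factorisation with $q_j=x^{\beta_j}$, $\beta_j\ge 0$ and $\sum_j\beta_j=Q$. Since $q$ is $x^\eta$-smooth and squarefree, I would first record the elementary fact that for any targets $\gamma_0,\gamma_1,\gamma_2\ge 0$ with $\gamma_0+\gamma_1+\gamma_2\le Q$ one can peel off divisors greedily to obtain $q_j$ with $\gamma_j\le\beta_j\le\gamma_j+\eta$ for $j=0,1,2$, leaving $q_3=q/(q_0q_1q_2)$ with $\beta_3\le\gamma_3:=Q-\gamma_0-\gamma_1-\gamma_2$. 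The structural point is that the leftover factor can only \emph{undershoot} its target; assigning it to $q_3$ (which occurs only in the $j=1$ summand, with a single positive exponent) makes its deviation harmless, whereas the distinguished factor $q_0$ — which appears in two terms with opposite signs — must be one of the built factors, so that its lower-bound constraint is not degraded by undershooting.

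With $q=x^Q$ and $q_j=x^{\beta_j}$, multiplying each term inside the parenthesis of Theorem \ref{divisorthm} by $q$ and reading off exponents, the requirement that every term be $\ll q^{-1}x^{1-c}$ for some fixed $c>0$ becomes the system
\[ \tfrac14+Q+\tfrac12\beta_3\le 1-c,\quad \tfrac18+\tfrac54Q+\tfrac14\beta_2\le 1-c,\quad \tfrac1{16}+\tfrac{11}8Q+\tfrac18\beta_1\le 1-c, \]
\[ \tfrac1{16}+\tfrac{11}8Q+\tfrac1{16}\beta_0\le 1-c,\qquad \tfrac32Q-\tfrac1{16}\beta_0\le 1-c. \]
The last two constraints trap $\beta_0$ between $24Q-16$ and $15-22Q$, while the first three cap $\beta_3,\beta_2,\beta_1$; feasibility of the whole system subject to $\sum_j\beta_j=Q$ reduces to requiring that the sum of the upper caps be at least $Q$. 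Incorporating the smoothness loss — an extra $-\eta$ in the caps on the three built factors $\beta_0,\beta_1,\beta_2$, but \emph{none} on the leftover $\beta_3$ nor on the lower bound for $\beta_0$ — this capacity condition is $\tfrac{55}2-40Q-3\eta\ge Q$, i.e.\ $Q\le\tfrac{55}{82}-\tfrac3{41}\eta$. Since $\tfrac{55}{82}-\tfrac23=\tfrac1{246}$, taking $Q=\tfrac23+\varpi$ this is precisely $246\varpi+18\eta\le 1$, so the strict hypothesis $246\varpi+18\eta<1$ yields a feasible choice of $\beta_j$ with a fixed positive margin $c>0$.

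Having fixed such a factorisation, each of the five terms in Theorem \ref{divisorthm} is $\ll q^{-1}x^{1-c}$, so its product with $x^{2\delta+\epsilon}$ is $\ll q^{-1}x^{1-c+2\delta+\epsilon}$. Choosing $\delta$ to be a small fixed multiple of $c$ (and below $\tfrac1{12}$), and then $\epsilon$ small enough, makes both the leading term $q^{-1}x^{1-\delta+\epsilon}$ and these products $\ll q^{-1}x^{1-\delta'}$ for some $\delta'>0$ depending only on $\varpi$ and $\eta$, which after relabelling is the assertion of Theorem \ref{divisorsmooth}.

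I expect the main obstacle to be purely bookkeeping: verifying that the five-constraint polytope is genuinely nonempty at the threshold — in particular that the two-sided window $24Q-16\le\beta_0\le 15-22Q$ for the distinguished factor $q_0$ stays nonempty (this needs $Q\le\tfrac{31}{46}$, comfortably above $\tfrac{55}{82}$) — and tracking the smoothness losses so that they contribute exactly $18\eta$ and not more. The genuine analytic content all sits in Theorem \ref{divisorthm}; the present deduction is an optimisation, and the one delicate point is the assignment of the deficit-absorbing factor to $q_3$ rather than to the two-sided slot $q_0$, since a careless assignment would degrade the constraint $\tfrac32Q-\tfrac1{16}\beta_0\le 1-c$ and inflate the coefficient of $\eta$.
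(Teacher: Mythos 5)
Your proposal is correct and follows essentially the same route as the paper: both deduce the result from Theorem \ref{divisorthm} by using $x^\eta$-smoothness to build a factorisation whose exponents approximate optimal targets, and your capacity condition $\tfrac{55}{2}-40Q-3\eta\geq Q$ is exactly the paper's explicit choice $Q_0=q^{-2/15}x^{1/3}$, $Q_1=q^{-1/15}x^{1/6}$, $Q_2=q^{7/15}x^{-1/6}$, $Q_3=q^{11/15}x^{-1/3}$ in disguise (the paper balances the four upper-bound terms, which is the LP optimum, and the fifth term is your window condition, checked separately). Two minor remarks that do not affect correctness: your peeling lemma needs a little slack (e.g.\ $\gamma_0+\gamma_1+\gamma_2\leq Q-2\eta$, which holds comfortably in the application, since the required target sum is about $3Q-\tfrac32$); and your closing worry about assigning the deficit-absorbing factor to $q_0$ is unfounded, since the paper does exactly that, with asymmetric windows $q_1\in[Q_1x^{-\eta/5},Q_1x^{4\eta/5}]$ etc., and still obtains the coefficient $18\eta$ because the constraint $\tfrac{3}{2}Q-\tfrac{1}{16}\beta_0\leq 1-c$ has ample slack.
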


Observe that for any $\varpi<\frac{1}{246}$ this theorem shows that there is an $\eta>0$ for which the conclusion holds.  This means that we get a bound for sufficiently smooth $q$ which are almost as large as $x^{\frac{55}{82}}$.  The smoothness assumption is not necessary, it is simply a convenient way of guaranteeing that suitably sized factors exist.  For example, given a squarefree $q\sim x^{2/3}$, Theorem \ref{divisorthm} gives a nontrivial estimate provided that, for some $\eta>0$, we have  $q=q_0q_1q_2q_3$ with 
$$x^\eta\leq q_0\leq x^{1/3-\eta}$$
and
$$q_j\leq x^{1/6-\eta}\text{ for }1\leq j\leq 3.$$

Writing 
$$e_q(x)=e^{\frac{2\pi i x}{q}},$$
the proof of Theorem \ref{divisorthm} depends on estimates for short Kloosterman sums 
$$\twosum{n\in I}{(n,q)=1}e_q(b\overline n),$$
where $b$ is an integer and $I$ is an interval of length $O(\sqrt{x})$.
If $(b,q)=1$ then the Weil bound gives an estimate of $O_\epsilon(q^{1/2+\epsilon})$ for such a sum.  For the sizes of $x$ and $q$ in which we are interested this is a significant saving over the trivial bound of $\sqrt{x}$.  In particular it is enough to estimate $E(x,q,a)$ if $q\leq x^{2/3-\eta}$.  For larger $q$ we must improve upon the Weil estimate.  This is achieved for special $q$ by means of the following result.

\begin{thm}\label{shortkloost}
Let $q=q_0q_1\ldots q_l$ be squarefree.  Suppose that $(a,q)=1$ and that $I$ is an interval of length at most $N\leq q$.  Let 
$$S=\twosum{n\in I}{(n,q)=1}e_q(a\overline n).$$
For any $\epsilon>0$ we have 
$$S\ll_{\epsilon,l} q^\epsilon\left(\sum_{j=1}^lN^{2^{-j}}q^{1/2-2^{-j}}q_{l-j+1}^{2^{-j}}+N^{2^{-l}}q^{1/2-2^{-l}}q_0^{1/2^{l+1}}+q^{1/2}q_0^{-1/2^{l+1}}\right).$$
\end{thm}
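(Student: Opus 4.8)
The plan is to prove the estimate by induction on $l$, iterating the $q$-analogue of van der Corput's $A$-process once for each of the factors $q_l, q_{l-1},\dots,q_1$ and finishing with a completion step modulo $q_0$. The inductive statement has to be strengthened so as to cover sums $\sum_{n\in I}e_q(F(n))$ in which $F$ is a rational function with a bounded number of poles — not only $F(n)=a\overline{n}$ but also the functions $F(n)=c\,\overline{n(n+h_1)\cdots(n+h_r)}$ produced by repeated differencing. The essential structural fact is that modulo each prime $p\mid q$ these functions are nondegenerate, so the Weil bound applies to the associated complete sums with conductor $O_l(1)$.

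For the inductive step I would write $q=q_l\,r$ with $r=q_0\cdots q_{l-1}$ and apply Weyl differencing by the shift $n\mapsto n+q_lk$ for $0\le k<K$, where $K$ is a parameter chosen at the very end. Cauchy--Schwarz in the outer variable gives
$$K^2|S|^2\ll (N+q_lK)\Big(KN+\sum_{0<|k|<K}(K-|k|)\sum_{n}e_q\big(a(\overline{n+q_lk}-\overline n)\big)\Big).$$
The algebraic simplification that drives the method is the identity $\overline{n+q_lk}-\overline{n}\equiv -q_lk\,\overline{n(n+q_lk)}\pmod q$; since $q_l\mid q$ the phase collapses to $e_r\big(-ak\,\overline{n(n+q_lk)}\big)$, a sum to the smaller modulus $r$ of exactly the inductive shape. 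The diagonal term $k=0$ contributes $\ll q_lN$ to $|S|^2$, which after taking the square root is precisely the $j=1$ term $N^{1/2}q_l^{1/2}$ of the theorem; the off-diagonal terms are the correlation sums to be handled recursively.

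The crucial point — and the place where a naive treatment loses a factor of roughly $(N/q^{1/2})^{1/2}$ — is that one must \emph{not} discard the oscillation in the shift variable $k$ by the triangle inequality. Instead I would complete the inner $n$-sum modulo $r$ and bound the resulting complete sums, which are genuinely two-variable exponential sums in $(n,k)$, using the Weil/Deligne bounds, thereby extracting square-root cancellation in $k$ as well as in $n$; equivalently, the surviving correlation is fed back into the induction so that the halving of the exponent at each level generates the powers $2^{-j}$. Repeating this $l$ times reduces the modulus down to $q_0$, and a final completion modulo $q_0$ combined with the Weil bound yields the last two terms: the principal frequency gives $N^{2^{-l}}q^{1/2-2^{-l}}q_0^{1/2^{l+1}}$, while the non-principal frequencies give $q^{1/2}q_0^{-1/2^{l+1}}$, the genuine saving over Weil. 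One then optimises the shift parameters $K_1,\dots,K_l$ jointly.

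I expect the main obstacle to be twofold. First is the algebraic input: one must check, uniformly over all $p\mid q$, that the complete exponential sums arising at every level remain nondegenerate, so that square-root cancellation holds with an absolutely bounded conductor, and treat separately the few ``bad'' primes — where poles collide, or the phase degenerates to a constant or to an additive $p$-th power — whose product divides a fixed quantity built from the shifts $q_lk$ and is therefore absorbed into $q^\epsilon$. Second is the bookkeeping: performing the joint optimisation of the $K_i$ while correctly tracking the coprimality conditions $(n,q)=(n+q_lk,q)=1$ and the boundary terms created by translating the interval, so that the exponents emerge as the stated $2^{-j}$ and $2^{-(l+1)}$ rather than the weaker exponents one obtains by bounding each level in isolation.
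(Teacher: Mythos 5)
Your plan is the $A^lB$ route: apply van der Corput differencing directly to the incomplete sum $S$ with shifts $q_lk$, recurse on the resulting rational-function phases, and complete only at the end. That is precisely Heath-Brown's argument \cite[Theorem 2]{rhbx32}, which the paper cites and explicitly contrasts with its own method, and it proves a genuinely different estimate: the terms come out as $N^{1-2^{-j}}q_{l-j+1}^{2^{-j}}$, $N^{1-2^{-l}}q_0^{1/2^{l+1}}$ and $Nq_0^{-1/2^{l+1}}$, useful for short sums $N\approx q^{1/(l+1)}$, and it cannot produce the stated bound, whose point is to beat Weil for long sums $N\approx q^{1-1/(l+1)}$. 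The paper's proof is a $BA^lB$ argument: it \emph{first} completes $S$, converting it into sums of length $K=[q/N]$ of Kloosterman sums $e_q(-Mk)S(a,k,q)$, then applies the $l$ differencing steps to this \emph{dual} sum (shifts $q_ih_i$ with $q_i\le K$, each step using CRT to split the Kloosterman sums and Weil on the mod-$q_i$ factors), arriving at sums over $k$ of products $\prod_{I\subseteq\{1,\dots,l\}}S\bigl(a',k+\sum_{i\in I}q_ih_i,q_0\bigr)$ of $2^l$ shifted Kloosterman sums, and only then completes modulo $q_0$. The fingerprint of this is visible in the theorem itself: each term is $\frac{N}{q}\cdot q^{1/2}$ times a van der Corput exponent applied to $K=q/N$, not to $N$; a differencing scheme run on the original length-$N$ sum puts those exponents on $N$ instead (your diagonal term matching $N^{1/2}q_l^{1/2}$ is an accident of $j=1$ self-duality, and the agreement stops there).

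You partially recognize the loss and propose to repair it by retaining the oscillation in the shift variable and bounding two-variable complete sums by Weil/Deligne. This does not close the gap, for two reasons. First, the paper's decisive saving does not come from cancellation in the shift variables at all: in the paper the sums over $h_1,\dots,h_l$ are estimated with absolute values, and the key final term $q^{1/2}q_0^{-1/2^{l+1}}$ comes from square-root cancellation in the completed $k$-sum of the product of $2^l$ shifted Kloosterman sums. That input is the Fouvry--Ganguly--Kowalski--Michel bound \cite{fgkm} (resting on the monodromy of Kloosterman sheaves), combined with a combinatorial lemma showing that the subset sums $\sum_{i\in I}q_ih_i$ cannot all occur with even multiplicity unless some $h_i\equiv 0$; your proposed algebraic input (one-variable Weil with bounded conductor for nondegenerate rational phases) is exactly the input for Heath-Brown's weaker bound. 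Second, even granting optimal square-root cancellation $\ll q_0^{1+\epsilon}$ for your two-variable complete sums uniformly in the frequencies, the bookkeeping fails: at $l=1$, completing both $n$ (length $N$) and $h$ (length $K=N/q_1$, with Fej\'er weights) modulo $q_0$ gives $K^2|S|^2\ll (N+q_1K)\bigl(KN+q^{\epsilon}(N+q_0)K\bigr)$, i.e.\ $|S|^2\ll q^{\epsilon}(Nq_1+q)$, so the second term is $q^{1/2}$ with no factor $q_0^{-1/4}$ --- no improvement over Weil, which is the entire content of the theorem. So the proposal as written would not yield the stated estimate; it reconstructs (at best) the $A^lB$ bound that the theorem is designed to supersede in the long-sum regime.
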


This theorem is very similar to that of Heath-Brown \cite[Theorem 2]{rhbx32}.  His result can be applied to our sum $S$ to obtain the bound 
$$S\ll_{\epsilon,l} q^\epsilon\left(\sum_{j=1}^lN^{1-2^{-j}}q_{l-j+1}^{2^{-j}}+N^{1-2^{-l}}q_0^{1/2^{l+1}}+Nq_0^{-1/2^{l+1}}\right).$$
When the sizes of the factors $q_j$ are chosen optimally this result of Heath-Brown is nontrivial provided that $N$ is approximately $q^{\frac{1}{l+1}}$.  In contrast, our bound is most useful when $N\approx q^{1-\frac{1}{l+1}}$ in which case it can improve on the Weil bound.  

As in Heath-Brown's work our proof of Theorem \ref{shortkloost} uses the $q$-analogue of van der Corput's method.  We begin by completing the sum $S$ and then apply the differencing process $l$-times, whereas Heath-Brown applied differencing directly to $S$.  The result is a sum of products of $2^l$ Kloosterman sums which we estimate by another completion followed by the application of a bound for complete exponential sums due to Fouvry, Ganguly, Kowalski and Michel \cite{fgkm}.  In other words, our result is a $q$-analogue of the $BA^lB$ van der Corput estimate whereas Heath-Brown's is analogous to $A^lB$. A $q$-analogue of $BA^2B$ was used by Heath-Brown in \cite{rhbcw} but the exponential sums in that work are not Kloosterman sums. 

The assumption that $q$ is squarefree is important for two reasons.  Firstly, it guarantees that the factors $q_j$ are coprime in pairs, thereby avoiding many unpleasant technicalities. Secondly, it means that we need only consider complete exponential sums to prime moduli.  To handle $q$ which are not squarefree Lemma \ref{completeexp} would have to be generalised to prime-power moduli.     

Throughout this work we use the notation $x\sim y$ for the inequality $y\leq x<2y$.  We adopt the standard convention that $\epsilon$ denotes a sufficiently small positive quantity whose value may differ at each occurrence.  

\subsection*{Acknowledgements}

This work was completed as part of my DPhil, for which I was funded by EPSRC grant EP/P505666/1. I am very grateful to the
EPSRC for funding me and to my supervisor, Roger Heath-Brown, for all
his valuable help and advice.  I would also like to thank Emmanuel Kowalski for his assistance with the complete exponential sums arising in this work. 

\section{Proof of Theorem \ref{divisorthm}}

In this section we will show that Theorem \ref{divisorthm} follows from Theorem \ref{shortkloost}.  Recall that we wish to estimate 
$$E(x,q,a)=\twosum{uv\leq x}{uv\equiv a\pmod q}1-\frac{1}{\varphi(q)}\twosum{uv\leq x}{(uv,q)=1}1.$$
By a dyadic subdivision it is enough to consider each of the $O((\log x)^2)$ sums of the form 
$$E_1(U,V,q,a)=\twosum{u\sim U,v\sim V}{uv\leq x,uv\equiv a\pmod q}1-\frac{1}{\varphi(q)}\twosum{u\sim U,v\sim V}{uv\leq x,(uv,q)=1}1=D_1(U,V,q,a)-D_1(U,V,q),$$
say. We must bound $E_1(U,V,q,a)$ for all $U,V\geq 1$ for which $UV\leq x$. However, by symmetry we can assume that $U\leq\sqrt{x}$.    

We will use a short interval decomposition to remove the constraint $uv\leq x$ from $D_1(U,V,q,a)$ and $D_1(U,V,q)$.  Specifically we divide the range $u\sim U$ into $O(x^\delta)$ intervals of length $Ux^{-\delta}$ and the range $v\sim V$ into $O(x^\delta)$ intervals of length $Vx^{-\delta}$.  We will denote the resulting intervals by 
$$I_1(U_1)=[U_1,U_1+Ux^{-\delta})$$
and 
$$I_2(V_1)=[V_1,V_1+Vx^{-\delta}).$$
We only need consider the case that $U_1V_1\leq x$.  Dropping the constraint $uv\leq x$ has the effect of including in the above sums points $(u,v)\in I_1(U_1)\times I_2(V_1)$ with 
$$x<uv\leq (U_1+Ux^{-\delta})(V_1+Vx^{-\delta})\leq x+O(x^{1-\delta}).$$
It follows that the errors introduced by removing the constraint are bounded by 
$$\twosum{x<n\leq x+O(x^{1-\delta})}{n\equiv a\pmod q}\tau(n)\ll_\epsilon q^{-1}x^{1-\delta+\epsilon}$$
and
$$\frac{1}{\varphi(q)}\twosum{x<n\leq x+O(x^{1-\delta})}{(n,q)=1}\tau(n)\ll_\epsilon q^{-1}x^{1-\delta+\epsilon}.$$
We conclude that it is enough 
to bound  $O(x^{2\delta}(\log x)^2)$ sums of the form  
$$E_2(U_1,V_1,q,a)=D_2(U_1,V_1,q,a)-D_2(U_1,V_1,q)$$
where 
$$D_2(U_1,V_1,q,a)=\#\{u\in I_1(U_1),v\in I_2(V_1):uv\equiv a\pmod q\}$$
and
$$D_2(U_1,V_1,q)=\frac{1}{\varphi(q)}\#\{u\in I_1(U_1),v\in I_2(V_1):(uv,q)=1\}.$$
Specifically we have 
$$E(x,q,a)\ll_\epsilon q^{-1}x^{1-\delta+\epsilon}+x^{2\delta+\epsilon}\max_{U_1,V_1}|E_2(U_1,V_1,q,a)|.$$ 

We now write
\begin{eqnarray*}
D_2(U_1,V_1,q,a)&=&\twosum{u\in I_1(U_1),v\in I_2(V_1)}{uv\equiv a\pmod q}1\\
&=&\twosum{u\in I_1(U_1)}{(u,q)=1}\twosum{v\in I_2(V_1)}{v\equiv a\overline u\pmod q}1\\
&=&\frac{1}{q}\twosum{u\in I_1(U_1)}{(u,q)=1}\sum_{v\in I_2(V_1)}\sum_{k=1}^q e_q(k(a\overline u-v))\\
&=&\frac{1}{q}\sum_{k=1}^q \left(\twosum{u\in I_1(U_1)}{(u,q)=1}e_q(ak\overline u)\right)\left(\sum_{v\in I_2(V_1)}e_q(-kv)\right).\\
\end{eqnarray*}
The $k=q$ terms in this are 
$$\frac{1}{q}\#\{u\in I_1(U_1):(u,q)=1\}\#I_2(V_1).$$
On the other hand 
\begin{eqnarray*}
D_2(U_1,V_1,q)&=&\frac{1}{\varphi(q)}\twosum{u\in I_1(U_1)}{(u,q)=1}\twosum{v\in I_2(V_1)}{(v,q)=1}1\\
&=&\frac{1}{\varphi(q)}\twosum{u\in I_1(U_1)}{(u,q)=1}\left(\frac{\varphi(q)}{q}\#I_2(V_1)+O_\epsilon(q^\epsilon)\right)\\
&=&\frac{1}{q}\#\{u\in I_1(U_1):(u,q)=1\}\#I_2(V_1)+O_\epsilon(q^{-1+\epsilon}x^{1/2}),\\
\end{eqnarray*}
where we have used our assumption that $U\leq \sqrt{x}$.  Since $\delta<\frac{1}{6}$ and $q\leq x$ we have 
$$x^{2\delta}\cdot q^{-1+\epsilon}x^{1/2}<q^{-1}x^{1-\delta+\epsilon}$$ 
so we conclude that the $k=q$ terms correspond to $D_2(U_1,V_1,q)$ with a sufficiently small error.

It remains to bound 
$$\frac{1}{q}\sum_{k=1}^{q-1} \left|\twosum{u\in I_1(U_1)}{(u,q)=1}e_q(ak\overline u)\right|\left|\sum_{v\in I_2(V_1)}e_q(-kv)\right|.$$
We write this as 
$$\frac{1}{q}\sum_{d|q}\twosum{k=1}{(k,q)=d}^{q-1}\left|\twosum{u\in I_1(U_1)}{(u,q)=1}e_q(ak\overline u)\right|\left|\sum_{v\in I_2(V_1)}e_q(-kv)\right|$$
$$=\frac{1}{q}\twosum{d|q}{d<q}\;\sumstar_{k\pmod {q/d}}\left|\twosum{u\in I_1(U_1)}{(u,q)=1}e_{q/d}(ak\overline u)\right|\left|\sum_{v\in I_2(V_1)}e_{q/d}(-kv)\right|.$$
However, since $q$ is squarefree we have 
\begin{eqnarray*}
\twosum{u\in I_1(U_1)}{(u,q)=1}e_{q/d}(ak\overline u)&=&\twosum{u\in I_1(U_1)}{(u,q/d)=1}e_{q/d}(ak\overline u)\sum_{e|(d,u)}\mu(e)\\
&=&\sum_{e|d}\mu(e)\twosum{u\in I_1(U_1)/e}{(u,q/d)=1}e_{q/d}(ak\overline {eu}).\\
\end{eqnarray*}
Our sum is therefore bounded by 
$$\frac{1}{q}\twosum{d|q}{d<q}\;\sumstar_{k\pmod {q/d}}\left|\sum_{v\in I_2(V_1)}e_{q/d}(-kv)\right|\sum_{e|d}\left|\twosum{u\in I_1(U)/e}{(u,q/d)=1}e_{q/d}(ak\overline {eu})\right|.$$
We have the standard estimate 
$$\sum_{v\in I_2(V_1)}e_{q/d}(-kv)\ll \min\left(Vx^{-\delta},\frac{1}{\|dk/q\|}\right)$$
so that this is at most 
$$\frac{1}{q}\twosum{d|q}{d<q}\sum_{e|d}\max_{(b,q/d)=1}\left|\twosum{u\in I_1(U)/e}{(u,q/d)=1}e_{q/d}(b\overline u)\right|\sumstar_{k\pmod {q/d}}\frac{1}{\|dk/q\|}$$
$$\ll_\epsilon q^{\epsilon}\twosum{d|q}{d<q}\frac{1}{d}\sum_{e|d}\max_{(b,q/d)=1}\left|\twosum{u\in I_1(U)/e}{(u,q/d)=1}e_{q/d}(b\overline u)\right|.$$
To estimate the contribution to this from 
$d\geq qx^{-2/3+2\delta}$
we apply the Weil bound which gives 
$$\max_{(b,q/d)=1}\left|\twosum{u\in I_1(U_1)/e}{(u,q/d)=1}e_{q/d}(b\overline {u})\right|\ll_\epsilon \frac{Ux^{-\delta}d}{qe}+(q/d)^{1/2+\epsilon}.$$
The contribution to our sum from such $d$ is therefore bounded by 
$$q^{\epsilon}\twosum{d|q}{qx^{-2/3+2\delta}\leq d<q}(\frac{Ux^{-\delta}}{q}+q^{1/2}/d^{3/2})\ll_\epsilon q^\epsilon(Ux^{-\delta}q^{-1}+q^{-1}x^{1-3\delta}).$$
The contribution of these $d$ to $E(x,q,a)$ is therefore $O_\epsilon(q^{-1}x^{1-\delta+\epsilon})$. If $q< x^{2/3-2\delta}$ then this analysis covers all values of $d$ and therefore completes the proof.  

If $q\geq x^{2/3-2\delta}$ and $d< qx^{-2/3+2\delta}$ we apply Theorem \ref{shortkloost} with $l=3$ and the factorisation 
$$\frac{q}{d}=\prod_{j=0}^3\frac{q_j}{(q_j,d)},$$
which holds since $q$ is squarefree.  We have 
$$\frac{q}{d}\geq x^{2/3-2\delta}\geq \sqrt{x},$$
since $\delta<\frac{1}{12}$.  We may therefore deduce that if $(b,q/d)=1$ then    
$$\frac{1}{d}\twosum{u\in I_1(U)/e}{(u,q/d)=1}e_{q/d}(b\overline {u})\ll_\epsilon 
q^\epsilon\left(\sum_{j=1}^3x^{2^{-j-1}}q^{1/2-2^{-j}}q_{4-j}^{2^{-j}}+x^{1/16}q^{3/8}q_0^{1/16}+q^{1/2}q_0^{-1/16}\right).$$
It follows that we have
$$q^{\epsilon}\twosum{d|q}{d<qx^{-2/3+2\delta}}\frac{1}{d}\sum_{e|d}\max_{(b,q/d)=1}\left|\twosum{u\in I_1(U)/e}{(u,q/d)=1}e_{q/d}(b\overline u)\right|$$
$$\ll_\epsilon q^\epsilon\left(\sum_{j=1}^3x^{2^{-j-1}}q^{1/2-2^{-j}}q_{4-j}^{2^{-j}}+x^{1/16}q^{3/8}q_0^{1/16}+q^{1/2}q_0^{-1/16}\right).$$
We conclude that the contribution of this to $E(x,q,a)$ is majorised by 
$$x^{2\delta+\epsilon}\left(
\sum_{j=1}^3x^{2^{-j-1}}q^{1/2-2^{-j}}q_{4-j}^{2^{-j}}
+x^{1/16}q^{3/8}q_0^{1/16}+q^{1/2}q_0^{-1/16}\right).$$
This completes the proof of Theorem \ref{divisorthm}.

\section{Proof of Theorem \ref{divisorsmooth}}

Suppose  $\varpi,\eta,q$ and $a$ are as in Theorem \ref{divisorsmooth}.  Let $\delta>0$ be a parameter which we will eventually choose to be very small.  We may suppose that $q\geq x^{2/3-2\delta}$ since the result is known for smaller $q$.  Applying Theorem \ref{divisorthm} we deduce that for any $\epsilon>0$ we have 
$$E(x,q,a)\ll_\epsilon q^{-1}x^{1-\delta+\epsilon}+x^{2\delta+\epsilon}\left(\sum_{j=1}^3x^{2^{-j-1}}q^{1/2-2^{-j}}q_{4-j}^{2^{-j}}+x^{1/16}q^{3/8}q_0^{1/16}+q^{1/2}q_0^{-1/16}\right).$$ 
The first term in this is sufficiently small.  We optimise the remaining terms by working with a factorisation for which $q_j\approx Q_j$ with 
$$Q_0=q^{-2/15}x^{1/3},$$
$$Q_1=q^{-1/15}x^{1/6},$$
$$Q_2=q^{7/15}x^{-1/6}$$
and 
$$Q_3=q^{11/15}x^{-1/3}.$$
Observe that $Q_0Q_1Q_2Q_3=q$ and that for all sufficiently small $\delta$ we have $Q_j>x^{1/18}>x^\eta$ for all $j$.  Since $q$ is $x^\eta$-smooth we may  find a factorisation $q=q_0q_1q_2q_3$ with 
$$q_1\in [Q_1x^{-\eta/5},Q_1x^{4\eta/5}],$$
$$q_2\in [Q_2x^{-3\eta/5},Q_2x^{2\eta/5}],$$
$$q_3\in [Q_3x^{-4\eta/5},Q_3x^{\eta/5}]$$
so that 
$$q_0\in [Q_0x^{-7\eta/5},Q_0x^{8\eta/5}].$$
This gives 
$$E(x,q,a)\ll_\epsilon q^{-1}x^{1-\delta+\epsilon}+x^{2\delta+\epsilon}\left(x^{1/12+\eta/10}q^{11/30}+x^{-1/48+7\eta/80}q^{61/120}\right).$$ 
Finally, recalling that $q\leq x^{2/3+\varpi}$ we get 
\begin{eqnarray*}
E(x,q,a)&\ll_\epsilon&q^{-1}x^{1-\delta+\epsilon}+q^{-1}x^{2\delta+\epsilon}\left(x^{1/12+\eta/10}q^{41/30}+x^{-1/48+7\eta/80}q^{181/120}\right)\\
&\ll_\epsilon&q^{-1}x^{1-\delta+\epsilon}+q^{-1}x^{2\delta+\epsilon}\left(x^{\frac{179+18\eta+246\varpi}{180}}+x^{\frac{709+63\eta+1086\varpi}{720}}\right).\\ 
\end{eqnarray*}
We know that 
$$246\varpi+18\eta<1.$$
In particular $\varpi<\frac{1}{246}$ and $\eta<\frac{1}{18}$ so 
$$63\eta+1086\varpi<\frac{649}{82}<8.$$
Theorem \ref{divisorsmooth} therefore follows on taking $\delta$ and $\epsilon$ sufficiently small in terms of $\varpi$ and $\eta$.

\section{Proof of Theorem \ref{shortkloost}}

Suppose that for some $1\leq j\leq l$ we have $[q/N]<q_{l-j+1}$.  Then 
$$q^\epsilon N^{2^{-j}}q^{1/2-2^{-j}}q_{l-j+1}^{2^{-j}}\gg q^{1/2+\epsilon}.$$
Our result therefore follows from the Weil bound.  We may therefore assume, for the remainder of the paper, that $[q/N]\geq q_j$ for all $1\leq j\leq l$.   

\subsection{Completion of $S$}

Let $f(k)$ be the Fourier transform of the interval $I$:
$$f(k)=\sum_{n\in I}e_q(-nk).$$
We have 
$$S=\frac{1}{q}\sum_{k\pmod q}f(k)S(a,k,q)$$
where $S(a,k,q)$ is the Kloosterman sum given by 
$$S(a,k,q)=\sumstar_{n\pmod q}e_q(a\overline n+kn).$$
Since $f(0)\ll N$ and $S(a,0,q)=\mu(q)\ll 1$ we get 
$$S\ll \frac{N}{q}+\frac{1}{q}\left|\sum_{k\ne 0\pmod q}f(k)S(a,k,q)\right|.$$
The term $N/q$ is clearly small enough.

We may assume that $I\subseteq [M,M+N)$ for some integer $M$.  We then write 
$$f(k)=e_q(-kM)\twosum{n<N}{n+M\in I}e_q(-kn)=e_q(-kM)g(k),$$
say.  Thus 
$$S\ll \frac{N}{q}+\frac{1}{q}\left|\sum_{k\ne 0\pmod q}g(k)e_q(-kM)S(a,k,q)\right|.$$
We will consider the contribution to this bound from $0<k\leq
q/2$. One can use a completely analogous treatment for the range $-q/2<k<0$.

We wish to remove the weight $g(k)$.  We have the standard estimate 
$$g(k)\ll \min\left(N,\frac{1}{\|k/q\|}\right)=\min\left(N,\frac{q}{k}\right).$$
In addition 
$$g'(k)=-2\pi i\twosum{n<N}{n+M\in I}\frac{n}{q}e_q(-kn)\ll \frac{N}{q}\min\left(N,\frac{q}{k}\right).$$
We will split the sum over $k$ into intervals on which we may remove $g(k)$ by partial summation.  Specifically, let $K=[q/N]$ and  
$$S(r)=\max_{0\leq L\leq K}\left|\sum_{(r-1)K<k\leq
    (r-1)K+L}e_q(-Mk)S(a,k,q)\right| \;\;\;\;\;(r=1,2,3,\ldots).$$ 
Summing by parts we get, for any $K'\leq K$ that 
$$\sum_{(r-1)K\leq k\leq (r-1)K+K'}g(k)e_q(-Mk)S(a,k,q)\ll S(r)\min\left(N,\frac{q}{(r-1)K}\right)\ll \frac{N}{r}S(r).$$
It is therefore sufficient to estimate 
$$\frac{N}{q}\sum_{r\ll N}\frac{S(r)}{r}$$
which we accomplish by bounding each $S(r)$ individually.  We will prove the following, which easily implies Theorem \ref{shortkloost}.

\begin{lem}\label{lem:Sr}
Under the hypotheses of Theorem \ref{shortkloost} and with $K,S(r)$ as above we have 
$$S(r)\ll_{\epsilon, l} q^{1/2+\epsilon}\left(\sum_{j=1}^lK^{1-2^{-j}}q_{l-j+1}^{2^{-j}}+K^{1-2^{-l}}q_0^{1/2^{l+1}}+Kq_0^{-1/2^{l+1}}\right).$$
\end{lem}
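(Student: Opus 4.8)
The plan is to bound each $S(r)$ by means of the $q$-analogue of van der Corput's $B A^l B$ process. Since $S(r)$ is a sum of the complete Kloosterman sums $S(a,k,q)=\sumstar_{n}e_q(a\overline n+kn)$ over a short interval of $k$ (of length at most $K=[q/N]$), the first step is to expand each such sum back into its definition and interchange the order of summation, writing
$$
S(r)\ll\left|\sum_n e_q(a\overline n)\sum_{k}e_q(kn)\right|,
$$
where the inner sum over $k$ runs over the short interval. After completing or reducing the $n$-sum, I would apply the Weyl--van der Corput differencing step $l$ times: at each step one shifts the variable $n\mapsto n+h$ for $h$ in a suitable range, squares out via Cauchy--Schwarz, and produces an off-diagonal sum involving the difference $a(\overline{n+h}-\overline n)=a\cdot(-h)\,\overline{n(n+h)}$. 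The length of the complete sum shrinks by a factor tied to the chosen factor $q_{l-j+1}$ at the $j$-th differencing, which is exactly why the factorisation $q=q_0q_1\cdots q_l$ enters, and the coprimality of the factors (guaranteed by squarefreeness) lets me treat each piece by the Chinese Remainder Theorem and work one prime factor at a time.

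After $l$ applications of differencing I expect to arrive at a product of $2^l$ Kloosterman sums (or a single complete exponential sum factoring as such a product modulo the relevant part of $q$), together with explicit congruence and coprimality conditions on the difference parameters $h_1,\dots,h_l$. The second main step is then to complete this resulting sum once more in the $n$-variable and estimate the resulting complete exponential sum. This is where I would invoke the bound of Fouvry, Ganguly, Kowalski and Michel for such products of Kloosterman-type sums; the square-root cancellation it provides, of size $q^{1/2+\epsilon}$ per modulus, is what yields the leading $q^{1/2+\epsilon}$ factor in the statement of the lemma. The diagonal contribution (where the difference parameters force degeneracies, e.g. $h_j\equiv 0$) must be separated out and estimated trivially; these diagonal terms are typically what produce the terms $K^{1-2^{-j}}q_{l-j+1}^{2^{-j}}$ after the repeated $\sqrt{\cdot}$ losses from each Cauchy--Schwarz.

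The most delicate part will be bookkeeping the coprimality and the precise powers of $K$ and of the individual factors $q_j$ through the $l$ nested Cauchy--Schwarz steps, so that the exponents $2^{-j}$ and $1-2^{-j}$ emerge correctly; a single misplaced factor of $2$ in an exponent propagates through all subsequent steps. A second genuine obstacle is ensuring that the complete-sum estimate of \cite{fgkm} actually applies, which requires verifying that the relevant sums are nondegenerate (i.e. not identically a product of trivial characters) for the moduli in question; the degenerate cases have to be peeled off and shown to contribute to the $q_0$-terms $K^{1-2^{-l}}q_0^{1/2^{l+1}}+Kq_0^{-1/2^{l+1}}$, where the factor $q_0$ plays the role of the modulus reserved to absorb these exceptional configurations. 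Once each $S(r)$ is controlled in this way, summing $\frac1q\sum_{r\ll N}S(r)/r$ and inserting $K=[q/N]$ recovers Theorem \ref{shortkloost}, as claimed.
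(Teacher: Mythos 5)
Your opening move already derails the argument: expanding each $S(a,k,q)$ back into its defining sum over $n$ and interchanging summation simply undoes the completion that produced $S(r)$ in the first place. The inner geometric sum $\sum_k e_q(k(n-M))$ localises $n$ to a range of length about $q/K\approx N$, so you are back at (a weighted form of) the original incomplete sum $\twosum{n\in I}{(n,q)=1}e_q(a\overline n)$, and differencing in the $n$-variable from there, with off-diagonal phases $-ah\,\overline{n(n+h)}$, is precisely Heath-Brown's $A^lB$ argument. As the introduction of the paper points out, that route yields a bound of the shape $q^\epsilon\bigl(\sum_{j}N^{1-2^{-j}}q_{l-j+1}^{2^{-j}}+N^{1-2^{-l}}q_0^{1/2^{l+1}}+Nq_0^{-1/2^{l+1}}\bigr)$, i.e.\ a bound in terms of $N$, the length of the $n$-range. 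Lemma \ref{lem:Sr}, however, asserts a bound in terms of $K=[q/N]$, and in the regime where Theorem \ref{shortkloost} is actually useful ($N\approx q^{l/(l+1)}$, so $K\approx q^{1/(l+1)}\ll N$) the $N$-bound is trivial. To obtain powers of $K$ one must difference in the $k$-variable, keeping the Kloosterman sums intact as functions of $k$: this is the paper's $BA^lB$ scheme (Lemmas \ref{onediff} and \ref{lem:ldiff}), in which each step uses the twisted multiplicativity $S(a,k,q_0q_1)=S(a\overline{q_1},k\overline{q_1},q_0)S(a\overline{q_0},k\overline{q_0},q_1)$, the Weil bound on the $q_1$-factor, and Cauchy--Schwarz over shifts $k\mapsto k+q_1h$; this is what makes the modulus shrink to $q_0$ while the summation length stays $\leq K$.

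This confusion also makes your second step internally inconsistent: differencing in $n$ produces complete sums with rational-function phases, \emph{not} products of shifted Kloosterman sums, so the Fouvry--Ganguly--Kowalski--Michel bound on sums of the form $\sum_{k}e_p(-kb)\prod_i\hypk(a(k+s_i);p)$ would have nothing to apply to. Products of $2^l$ shifted Kloosterman sums $\prod_{I\subseteq\{1,\ldots,l\}}S\bigl(a',k+\sum_{i\in I}q_ih_i,q_0\bigr)$ arise exactly because the differencing is performed in $k$. Finally, even granting the correct shape, you leave the degenerate case of the complete-sum estimate as a hand-wave: the paper needs a separate combinatorial lemma (via norms of the cyclotomic integers $\prod_i(1+\omega^{h_i})$) showing that if all the subset sums $\sum_{i\in I}h_i$ occur with even multiplicity then some $h_i\equiv 0\pmod p$; it is this fact that yields the gcd factor $(q_0,\prod h_i)^{1/2}$ and, after summing over the $h_i$, the $q_0$-terms $K^{1-2^{-l}}q_0^{1/2^{l+1}}+Kq_0^{-1/2^{l+1}}$ in the lemma. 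As it stands, your proposal would prove (a version of) Heath-Brown's theorem, not Lemma \ref{lem:Sr}.
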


\subsection{Differencing the Sum $S(r)$}

In the remainder of the paper we will frequently use without comment the fact that, since $q$ is squarefree, any  pair of integers $q',q''$ with $q'q''|q$ must be coprime.  We now apply a $q$-analogue of the van der Corput $A$-process.  Let $J$ be an interval whose length is bounded above by $K$.  Suppose $(a,q)=1$ and $s_1,\ldots, s_j$ are integers, for some $j\geq 1$.  We consider the more general sum 
$$T=\sum_{k\in J}e_q(-Mk)\prod_{i=1}^j S(a,k+s_i,q)$$
in which the value of $M$ may differ from that in $S(r)$.  The sums
$S(r)$ correspond to the case $j=1$ and $s_1=0$ of this.  The
following lemma describes a single van der Corput differencing step
applied to the sum $T$.  Note that the quantities $q,q_0,q_1$ occurring need not correspond to those in Theorem \ref{shortkloost}. 

\begin{lem}\label{onediff}
Suppose $q=q_0q_1$ with $q_1\leq K$, $(q_0,q_1)=1$ and $(a,q)=1$.  We have 
$$T^2\ll_{\epsilon,j} q^\epsilon q_1^{j+1}\left(Kq_0^j+\sum_{0<|h|\leq K/q_1}\left|\sum_{k\in J(h)}\prod_{i=1}^jS(a',k+s_i,q_0)S(a',k+q_1h+s_i,q_0)\right|\right)$$
where $J(h)$ is an interval of length at most $K$ which depends on $h$, and where $a'=a(\overline q_1)^2$.  
\end{lem}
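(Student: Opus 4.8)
The plan is to execute a single $q$-analogue of van der Corput's $A$-process, namely a Weyl-type differencing step combined with the Chinese Remainder Theorem splitting of the modulus $q=q_0q_1$. First I would exploit the factorisation of the Kloosterman sums. Since $q=q_0q_1$ with $(q_0,q_1)=1$, the CRT multiplicativity of Kloosterman sums gives $S(a,k+s_i,q)=S(a\overline{q_1}^2,k+s_i,q_0)\,S(a\overline{q_0}^2,k+s_i,q_1)$, where the bars denote inverses modulo the complementary factor. This lets me separate the dependence on the two moduli inside the product over $i$. The key structural point is that the $q_1$-part $\prod_i S(a\overline{q_0}^2,k+s_i,q_1)$ depends on $k$ only through its residue class modulo $q_1$, so I can organise the sum over $k\in J$ according to $k\equiv k_0\pmod{q_1}$.

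Next I would introduce the differencing. After pulling out the $q_1$-part (each such factor being bounded by $O_\epsilon(q_1^{1/2+\epsilon})$ via Weil, so the product over $j$ factors contributes a manageable power of $q_1$), the remaining inner sum runs over $k$ in an arithmetic progression modulo $q_1$ inside $J$. I would reparametrise $k=q_1m+k_0$ so that $m$ ranges over an interval of length at most $K/q_1$, turning the inner sum into $\sum_m e_q(-M(q_1m+k_0))\prod_i S(a\overline{q_1}^2,q_1m+k_0+s_i,q_0)$. To this sum over $m$ I apply the Cauchy–Schwarz / van der Corput shift: writing $U(m)$ for the summand, I bound $|\sum_m U(m)|^2$ by a standard inequality of the shape $\ll (K/q_1)\sum_{|h|\le K/q_1}|\sum_m U(m+h)\overline{U(m)}|$. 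Squaring $T$ and summing over the $O(q_1)$ residue classes $k_0$ (using Cauchy–Schwarz across classes to convert $(\sum_{k_0})^2$ into $q_1\sum_{k_0}$) produces the factor $q_1^{j+1}$, while the product $U(m+h)\overline{U(m)}$ yields the paired Kloosterman sums $\prod_i S(a',k+s_i,q_0)S(a',k+q_1h+s_i,q_0)$ with $a'=a\overline{q_1}^2$, after absorbing the phase $e_q(-M\cdot)$ (whose linear-in-$m$ part survives as the modified $M$ hidden in the interval $J(h)$, or cancels on the diagonal). The diagonal term $h=0$ contributes $\sum_{k}\prod_i|S(a',k+s_i,q_0)|^2\ll K q_0^j$ after Weil, which is exactly the $Kq_0^j$ term in the statement, scaled by the $q_1^{j+1}$ prefactor.

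The main obstacle I anticipate is bookkeeping the phase and the shifted intervals cleanly. The factor $e_q(-Mk)$ mixes the two moduli, and after differencing the surviving linear phase in $m$ must be reinterpreted correctly so that the resulting sum over $k\in J(h)$ is again a clean sum of paired Kloosterman sums to modulus $q_0$ with the interval $J(h)$ of length at most $K$ depending only on $h$; one has to check the phase contributes no net factor beyond what is absorbed into $J(h)$'s location. A related subtlety is verifying that the change of argument $a\mapsto a'=a\overline{q_1}^2$ is the correct twist coming from the CRT isomorphism, and that $(a',q_0)=1$ so the paired sums are genuine Kloosterman sums. Tracking the exponents of $q_1$ through the two applications of Cauchy–Schwarz (once over residue classes $k_0\pmod{q_1}$, giving one factor $q_1$, and the Weil bound on the $j$ separated $q_1$-parts, giving $q_1^{j}$, for the total $q_1^{j+1}$) is the delicate part; everything else is a routine application of the Weil bound and the standard van der Corput inequality.
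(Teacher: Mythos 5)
Your overall strategy---CRT factorisation of the Kloosterman sums, Weil applied to the $q_1$-parts, differencing with shifts that are multiples of $q_1$, and the bookkeeping $q_1\cdot q_1^{j}=q_1^{j+1}$ with diagonal term $Kq_0^j$---is the same as the paper's. But there is a genuine gap in how you organise the Cauchy--Schwarz step. You split $k\in J$ into residue classes $k\equiv k_0\pmod{q_1}$, apply Cauchy--Schwarz \emph{across} the $q_1$ classes, and then apply the van der Corput inequality \emph{within} each class. Both steps put absolute values around each class's contribution, so your off-diagonal terms have the shape
$$\sum_{k_0\pmod{q_1}}\;\sum_{0<|h|\leq K/q_1}\Bigl|\twosum{k\in J(h,k_0)}{k\equiv k_0\pmod{q_1}}\prod_{i=1}^{j}S(a',k+s_i,q_0)S(a',k+q_1h+s_i,q_0)\Bigr|,$$
i.e.\ sums restricted to a single arithmetic progression modulo $q_1$, with the classes separated by absolute values. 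The lemma asserts a bound by a \emph{single} sum over a full interval $J(h)$ of length at most $K$ for each $h$. Since $\bigl|\sum_{k\in J(h)}\cdots\bigr|\leq\sum_{k_0}\bigl|\sum_{k\equiv k_0}\cdots\bigr|$, what your argument yields is strictly weaker and does not imply the statement. The distinction is not cosmetic: in the induction of Lemma \ref{lem:ldiff} and in the completion of $T(h_1,\ldots,h_l)$ modulo $q_0$, the sums must run over full intervals; completing your progression-restricted pieces and summing over the $q_1$ classes costs an extra factor of order $q_1$ (the quantity $(K/q_0+1)$ becomes $(K/q_0+q_1)$), and the substitution $k=q_1m+k_0$ replaces the shifts $s_i$ by $k_0$-dependent ones, spoiling the structure $\sum_{i\in I}q_ih_i$ on which the later combinatorial and gcd arguments depend.

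The repair is to postpone all absolute values until the classes have been recombined. Writing $T=\sum_{k_0}c_{k_0}V_{k_0}$ with $c_{k_0}$ the $q_1$-Kloosterman product and $V_{k_0}=\sum_{k\equiv k_0,\,k\in J}W(k)$, Cauchy--Schwarz gives $T^2\ll_{\epsilon,j} q_1^{j+1+\epsilon}\sum_{k_0}|V_{k_0}|^2$; now expand $|V_{k_0}|^2$ over pairs $k\equiv k'\pmod{q_1}$ and sum over $k_0$ \emph{before} taking absolute values. The pairs then recombine: setting $k'=k+q_1h$, the sum over all classes of all within-class pairs is exactly $\sum_{|h|\leq K/q_1}\sum_{k\in J\cap(J-q_1h)}W(k+q_1h)\overline{W(k)}$, a full-interval sum for each $h$, which (Kloosterman sums being real) is precisely the quantity in the lemma. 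This is equivalent to the paper's route, which writes $T=\frac1H\sum_k\sum_{h=1}^H a_{k+q_1h}$ with $H=[K/q_1]$, pulls the $h$-invariant $q_1$-factors out by Weil, and applies Cauchy--Schwarz in $k$ over the full interval of length $O(K)$, so that expanding in $(h_1,h_2)$ leaves full-interval $k$-sums. A secondary slip: the inequality you quote, $|\sum_m U(m)|^2\ll(K/q_1)\sum_{|h|\leq K/q_1}|\sum_m U(m+h)\overline{U(m)}|$, carries a spurious factor $K/q_1$ (when the shift length equals the length of the range the correct constant is $O(1)$); as written it is inconsistent with your own claim that the diagonal contributes $Kq_0^j$, and it would inflate your final bound by $K/q_1$.
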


\begin{proof}
We let $H=[K/q_1]\geq 1$ and 
$$a_k=\begin{cases}
e_q(-Mk)\prod_{i=1}^jS(a,k+s_i,q) & k\in J\\
0 & k\notin J.\\
\end{cases}$$
Since $H\geq 1$ we have 
\begin{eqnarray*}
T&=&\sum_k a_k\\
&=&\frac{1}{H}\sum_{h=1}^H\sum_k a_{k+q_1h}\\
&=&\frac{1}{H}\sum_k \sum_{h=1}^H a_{k+q_1h}.\\
\end{eqnarray*}
If $k+q_1h\in J$ then 
\begin{eqnarray*}
a_{k+q_1h}&=&e_q(-M(k+q_1h))\prod_{i=1}^jS(a,k+q_1h+s_i,q)\\
&=&e_q(-Mk)e_q(-Mq_1h)\prod_{i=1}^jS(a\overline q_1,(k+q_1h+s_i)\overline q_1,q_0)S(a\overline q_0,(k+s_i)\overline q_0,q_1).\\
\end{eqnarray*}
Since $q_1H\leq K$ the sum over $k$ is supported on an interval of length bounded by $O(K)$. By the Weil bound we get 
$$S(a\overline q_0,(k+s_i)\overline q_0,q_1)\ll_\epsilon q_1^{1/2+\epsilon}.$$
Therefore, applying Cauchy's inequality, we obtain 
$$H^2T^2\ll_{\epsilon,j} Kq_1^{j+\epsilon}\sum_k\left|\twosum{h=1}{k+q_1h\in J}^He_q(-Mq_1h)\prod_{i=1}^j S(a\overline q_1,(k+q_1h+s_i)\overline q_1,q_0)\right|^2.$$
Letting $a'=a(\overline q_1)^2$, as in the statement of the lemma,  we have $(a',q)=1$ and 
$$H^2T^2\ll_{\epsilon,j} Kq_1^{j+\epsilon}\sum_k\left|\twosum{h=1}{k+q_1h\in J}^He_q(-Mq_1h)\prod_{i=1}^j S(a',k+q_1h+s_i,q_0)\right|^2.$$ 
Expanding the square and reordering we deduce that 
\begin{eqnarray*}
H^2T^2&\ll_{\epsilon,j}&Kq_1^{j+\epsilon}\sum_{h_1,h_2=1}^H\left|\twosum{k}{k+q_1h_1,k+q_1h_2\in J}\prod_{i=1}^jS(a',k+q_1h_1+s_i,q_0)S(a',k+q_1h_2+s_i,q_0)\right|\\
&=&Kq_1^{j+\epsilon}\sum_{h_1,h_2=1}^H\left|\twosum{k\in J}{k+q_1(h_2-h_1)\in J}\prod_{i=1}^jS(a',k+s_i,q_0)S(a',k+q_1(h_2-h_1)+s_i,q_0)\right|\\
&\leq&KHq_1^{j+\epsilon}\sum_{|h|\leq H}\left|\twosum{k\in J}{k+q_1h\in J}\prod_{i=1}^jS(a',k+s_i,q_0)S(a',k+q_1h+s_i,q_0)\right|.\\
\end{eqnarray*}
We bound the $h=0$ term using the Weil bound on the individual Kloosterman sums to get 
$$HT^2\ll_{\epsilon,j} Kq_1^{j+\epsilon}\left(Kq_0^{j+\epsilon}+\sum_{0<|h|\leq H}\left|\twosum{k\in J}{k+q_1h\in J}\prod_{i=1}^jS(a',k+s_i,q_0)S(a',k+q_1h+s_i,q_0)\right|\right).$$
The result follows.  
\end{proof}

The previous lemma bounds $T$ in terms of sums with twice as many
Kloosterman factors. The new shifts are
$s_1,\ldots,s_j,s_1+q_1h,\ldots,s_j+q_1h$, and  
the exponential $e_q(-kM)$, if it exists, is removed.
We will apply it $l$ times, starting at the sum 
$$T=\sum_{k\in J}e_q(-Mk)S(a,k,q).$$
For the remainder of the paper $T$ will refer to this particular $j=1$ case of the above $T$ whereas $T(\ldots)$ will be one of the more general sums.

\begin{lem}\label{lem:ldiff}
Let $q$ be as in Theorem \ref{shortkloost} and $T$ as defined above.  We have
\begin{eqnarray*}
T^{2^l}&\ll_{\epsilon,l}&q^\epsilon\left(q^{2^{l-1}}\sum_{j=1}^lK^{2^l-2^{l-j}}q_{l-j+1}^{2^{l-j}}\right.\\ 
&&\hspace{2cm}+\left.K^{2^l-l-1}\left(q/q_0\right)^{2^{l-1}+1}\sum_{0<|h_1|\leq K/q_1}\ldots\sum_{0<|h_l|\leq K/q_l}|T(h_1,\ldots,h_l)|\right)\\ 
\end{eqnarray*}
where
$$T(h_1,\ldots,h_l)=\sum_{k\in J(h_1,\ldots,h_l)}\prod_{I\subseteq \{1,\ldots,l\}}S\left(a',k+\sum_{i\in I}q_ih_i,q_0\right),$$
with $J(h_1,\ldots,h_l)$ an interval of length at most $K$ and $(a',q)=1$.  
\end{lem}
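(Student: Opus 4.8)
The plan is to prove Lemma~\ref{lem:ldiff} by induction on the number of differencing steps, applying Lemma~\ref{onediff} exactly $l$ times starting from the $j=1$, $s_1=0$ sum $T=\sum_{k\in J}e_q(-Mk)S(a,k,q)$. At the $m$-th step (for $1\le m\le l$) I would peel off the factor $q_{l-m+1}$, so that after $m$ steps the Kloosterman factors are taken to the modulus $q/(q_l\cdots q_{l-m+1})$ and $q_0$ survives until the very last step. The reduction made at the start of this section lets us assume $q_j\le [q/N]=K$ for every $1\le j\le l$, so each factor that is peeled satisfies the hypothesis $q_{l-m+1}\le K$ of Lemma~\ref{onediff}, while squarefreeness of $q$ ensures that the two parts of each factorisation are coprime and that the modulus remains coprime to the successive $a'$. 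Since one application of Lemma~\ref{onediff} turns a sum with $j$ factors into one with $2j$ factors, the number of Kloosterman factors runs through $1,2,\ldots,2^{l-1}$ on the way in; tracking the shifts $s_i$ and $s_i+q_{l-m+1}h_{l-m+1}$ produced at each step shows that after all $l$ steps they are exactly the subset sums $\sum_{i\in I}q_ih_i$ for $I\subseteq\{1,\ldots,l\}$, which is the form of $T(h_1,\ldots,h_l)$ claimed.

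The quantity I would carry through the induction is a bound of the shape
$$T^{2^m}\ll_{\epsilon,l}q^\epsilon\left(D_m+C_m\twosum{h_{l-m+1},\ldots,h_l}{0<|h_i|\le K/q_i}|T|\right)$$
for suitable $C_m$ and $D_m$. To pass from $m$ to $m+1$ I would square, use $(A+B)^2\ll A^2+B^2$ to split off the diagonal $D_m^2$, apply Cauchy--Schwarz to bring the square inside the $h$-sum at the cost of the number of tuples (which is $\ll K^m/(q_l\cdots q_{l-m+1})$), and then insert the estimate of Lemma~\ref{onediff} for each $|T|^2$. Reading off the coefficients yields the recursion $C_{m+1}=C_m^2\,K^m q_{l-m}^{2^m+1}/(q_l\cdots q_{l-m+1})$ together with a fresh diagonal term $E_{m+1}=C_{m+1}K^{m+1}Q_{m+1}^{2^m}/(q_l\cdots q_{l-m+1})$, where $Q_{m+1}=q/(q_l\cdots q_{l-m})$ is the surviving modulus; iterating $D_{m+1}=D_m^2+E_{m+1}$ gives $D_l\ll\sum_{j=1}^lE_j^{2^{l-j}}$.

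Solving these recursions is then a routine computation. One finds $C_m=K^{2^m-m-1}(q_l\cdots q_{l-m+1})^{2^{m-1}+1}$, so that $C_l=K^{2^l-l-1}(q/q_0)^{2^{l-1}+1}$, which is precisely the off-diagonal coefficient in the lemma. For the diagonal, a short simplification collapses each new term to $E_{m+1}=K^{2^{m+1}-1}q^{2^m}q_{l-m}$; raising the term born at step $j$ to the power $2^{l-j}$ gives $q^{2^{l-1}}K^{2^l-2^{l-j}}q_{l-j+1}^{2^{l-j}}$, and summing over $j$ reproduces the stated diagonal $q^{2^{l-1}}\sum_{j=1}^lK^{2^l-2^{l-j}}q_{l-j+1}^{2^{l-j}}$.

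All the arithmetic content sits in Lemma~\ref{onediff}; the main obstacle here is purely the bookkeeping of exponents. The one place that rewards care is the collapse of the diagonal term: one must check the identity $(q_l\cdots q_{l-m+1})^{2^m}\,Q_{m+1}^{2^m}=q^{2^m}/q_{l-m}^{2^m}$, which is what produces the clean powers of $q$ and $K$ in $E_{m+1}$, and one must confirm that the exponents of the already-peeled $q_i$ stay equal to one another through the squaring recursion --- they satisfy $2(2^{m-1}+1)-1=2^m+1$, exactly the exponent the newly peeled factor picks up. Checking that each interval $J(\ldots)$ retains length at most $K$ and that the auxiliary parameter $a'$ stays coprime to $q$ throughout is immediate from the form of the output of Lemma~\ref{onediff}.
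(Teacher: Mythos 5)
Your proposal is correct and is essentially the paper's own argument: the paper proves the lemma by induction on $l$, applying the inductive hypothesis to the coarser factorisation $q=(q_0q_1)q_2\cdots q_l$ and then peeling $q_1$ with one further application of Lemma~\ref{onediff}, which when unrolled is exactly your forward recursion peeling $q_l,q_{l-1},\ldots,q_1$ with a squaring and Cauchy--Schwarz step between successive applications. Your coefficient bookkeeping ($C_m$, $E_{m+1}$, and $D_l\ll\sum_{j=1}^l E_j^{2^{l-j}}$) reproduces precisely the exponents the paper obtains inductively, so the two proofs agree in substance.
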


\begin{proof}
Observe that by our assumption that $[q/N]\geq q_i$  we know that $K\geq q_i$ for $1\leq i\leq l$.  This means that the applications of Lemma \ref{onediff} in the following proof are all justified.

We use induction in $l$.  If $l=1$ then applying Lemma \ref{onediff} gives
$$T^2\ll_\epsilon q^\epsilon\left(qKq_1+q_1^2\sum_{0<|h_1|\leq K/q_1}\left|\sum_{k\in J(h_1)}S(a',k,q_0)S(a',k+q_1h_1,q_0)\right|\right),$$
as required.

Now suppose $l>1$ and that the result holds for $l-1$.  We assume that $q=q_0q_1\ldots q_l$ and apply the inductive hypothesis with the factorisation 
$$q=r_0r_1\ldots r_{l-1}$$
where $r_0=q_0q_1$, and $r_i=q_{i+1}$ for $1\leq i\leq l-1$.  This results in 
\begin{eqnarray*}
T^{2^{l-1}}&\ll_{\epsilon,l} &q^\epsilon\left(q^{2^{l-2}}
\sum_{j=1}^{l-1}K^{2^{l-1}-2^{l-1-j}}q_{l-j+1}^{2^{l-1-j}}\right.\\
&&\hspace{2cm}+\left.K^{2^{l-1}-l}\left(q/q_0q_1\right)^{2^{l-2}+1}\sum_{0<|h_2|\leq
  K/q_2}\ldots\sum_{0<|h_l|\leq K/q_l}|T(h_2,\ldots,h_l)|\right)\\ 
\end{eqnarray*}
with 
$$T(h_2,\ldots,h_l)=\sum_{k\in J(h_2,\ldots,h_l)}\prod_{I\subseteq \{2,\ldots,l\}}S\left(a',k+\sum_{i\in I}q_ih_i,q_0q_1\right).$$
Squaring our bound, using Cauchy's inequality on the final sum, we get 
\begin{eqnarray*}
T^{2^l}&\ll_{\epsilon,l}&q^\epsilon\left(q^{2^{l-1}}\sum_{j=1}^{l-1}K^{2^{l}-2^{l-j}}q_{l-j+1}^{2^{l-j}}\right.\\
&&\hspace{2cm}+\left.K^{2^{l}-l-1}\left(q/q_0q_1\right)^{2^{l-1}+1}\sum_{0<|h_2|\leq K/q_2}\ldots\sum_{0<|h_l|\leq K/q_l}|T(h_2,\ldots,h_l)|^2\right).\\ 
\end{eqnarray*}
We now use Lemma \ref{onediff} with $j=2^{l-1}$ to get the bound 
$$T(h_2,\ldots,h_l)^2\ll_{\epsilon,l} q^\epsilon q_1^{2^{l-1}+1}\left(Kq_0^{2^{l-1}}+\sum_{0<|h_1|\leq K/q_1}|T(h_1,\ldots,h_l)|\right)$$
where 
\begin{eqnarray*}
T(h_1,\ldots,h_l)&=&\sum_{k\in J(h_1,\ldots,h_l)}\prod_{I\subseteq \{2,\ldots,l\}}S\left(a'',k+\sum_{i\in I}q_ih_i,q_0\right)S\left(a'',k+\sum_{i\in I}q_ih_i+q_1h_1,q_0\right)\\
&=&\sum_{k\in J(h_1,\ldots,h_l)}\prod_{I\subseteq \{1,\ldots,l\}}S\left(a'',k+\sum_{i\in I}q_ih_i,q_0\right),\\
\end{eqnarray*}
for some $(a'',q)=1$.  Observe that this corresponds precisely to the $T(h_1,\ldots,h_l)$ given in the claim.

We conclude that 
\begin{eqnarray*}
T^{2^l}&\ll_{\epsilon,l}&q^\epsilon\left(q^{2^{l-1}}\sum_{j=1}^{l-1}K^{2^{l}-2^{l-j}}q_{l-j+1}^{2^{l-j}}+K^{2^{l}-1}\left(q/q_0q_1\right)^{2^{l-1}}q_1^{2^{l-1}+1}q_0^{2^{l-1}}\right.\\
&&\hspace{2cm}+\left.K^{2^{l}-l-1}\left(q/q_0\right)^{2^{l-1}+1}\sum_{0<|h_1|\leq
  K/q_1}\ldots\sum_{0<|h_l|\leq K/q_l}|T(h_1,\ldots,h_l)|\right)\\ 
&=&q^\epsilon\left(q^{2^{l-1}}\sum_{j=1}^lK^{2^{l}-2^{l-j}}q_{l-j+1}^{2^{l-j}}\right.\\ 
&&\hspace{2cm}+\left.K^{2^{l}-l-1}\left(q/q_0\right)^{2^{l-1}+1}\sum_{0<|h_1|\leq
  K/q_1}\ldots\sum_{0<|h_l|\leq K/q_l}|T(h_1,\ldots,h_l)|\right).\\ 
\end{eqnarray*}
\end{proof}

\subsection{Estimating $T(h_1,\ldots,h_l)$}

It remains to estimate 
$$T(h_1,\ldots,h_l)=\sum_{k\in J(h_1,\ldots,h_l)}\prod_{I\subseteq \{1,\ldots,l\}}S\left(a',k+\sum_{i\in I}q_ih_i,q_0\right),$$
where $(a',q_0)=1$.  

We begin with the following estimate for complete exponential sums to a prime modulus.

\begin{lem}\label{completeexp}
Let $p$ be a prime, $(a,p)=1$ and let $s_1,\ldots,s_j,b$ be integers.  We have 
$$\sum_{k\pmod p}e_p(-kb)\prod_{i=1}^jS(a,k+s_i,p)\ll_j \begin{cases}
p^{\frac{j+2}{2}} & b=0\text{ and }E(s_1,\ldots,s_j)\\
p^{\frac{j+1}{2}} & \text{otherwise},\\
\end{cases}$$ 
where $E(s_1,\ldots,s_j)$ denotes the property that all the $s_i$
occur with even multiplicity.
\end{lem}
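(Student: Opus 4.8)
The plan is to expand each Kloosterman sum $S(a,k+s_i,p)=\sumstar_{n_i\pmod p}e_p(a\overline{n_i}+(k+s_i)n_i)$ and then carry out the sum over $k\pmod p$ first. Writing out the product of $j$ such sums introduces variables $n_1,\ldots,n_j$, each running over $(\Z/p\Z)^\star$, and the complete sum over $k$ becomes $\sum_{k\pmod p}e_p\!\left(k(n_1+\cdots+n_j-b)\right)$. This is an orthogonality sum: it equals $p$ when $n_1+\cdots+n_j\equiv b\pmod p$ and vanishes otherwise. Thus the whole quantity reduces to
$$p\,e_p\!\left(\sum_{i=1}^j s_i n_i\right)\twosum{n_1,\ldots,n_j\pmod p}{n_1+\cdots+n_j=b}\prod_{i=1}^j e_p\!\left(a\overline{n_i}\right),$$
with each $n_i$ restricted to be invertible. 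The saving over the trivial bound $p^{j+1}$ must therefore come from nontrivial cancellation in this $(j-1)$-dimensional complete sum over the affine hyperplane $\sum n_i=b$.

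The natural next step is to interpret the remaining sum as a complete exponential sum attached to the rational function $\sum_i(a\overline{n_i}+s_in_i)$ on the variety $n_1+\cdots+n_j=b$ inside $\mathbb{G}_m^j$, and to invoke the Deligne-type square-root cancellation bound. Over a hyperplane of dimension $j-1$, square-root cancellation predicts $O(p^{(j-1)/2})$, which after the factor $p$ gives exactly the claimed $p^{(j+1)/2}$ in the generic case. The degenerate case, where the bound is only $p^{(j+2)/2}$, should correspond to the phase function becoming (partially) degenerate, and I expect this to happen precisely when $b=0$ and the $s_i$ pair up — i.e.\ the property $E(s_1,\ldots,s_j)$. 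When $b=0$ the constraint is $\sum n_i=0$, and if the shifts occur in equal pairs $s_i=s_{i'}$ one can substitute $n_i\mapsto -n_{i'}$ within each pair so that the linear terms $s_in_i+s_{i'}n_{i'}$ cancel and the inverse terms reinforce, producing an extra diagonal contribution of size $p$; this is the source of the loss of one power of $p^{1/2}$.

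The main obstacle will be establishing the square-root cancellation rigorously and pinning down exactly when the worst case occurs, since this is genuinely where the input of Fouvry, Ganguly, Kowalski and Michel \cite{fgkm} is needed: one must verify that the relevant sheaf on $\mathbb{G}_m^j$ (or its restriction to the hyperplane) is geometrically irreducible and nontrivial away from the degenerate configuration, so that Deligne's bound applies with the correct conductor. I would treat small $j$ by hand to fix the pattern, then argue the general case by citing \cite{fgkm} for the sum $\sum_k e_p(-kb)\prod_i S(a,k+s_i,p)$ directly, checking that their hypotheses reduce precisely to the condition $E(s_1,\ldots,s_j)$ governing the transition between $p^{(j+1)/2}$ and $p^{(j+2)/2}$. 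The coprimality $(a,p)=1$ is what guarantees each Kloosterman sheaf is nontrivial, and the squarefree hypothesis on $q$ elsewhere is exactly what lets us reduce to this prime-modulus case.
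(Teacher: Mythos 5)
Your opening reduction (expanding each Kloosterman sum and summing over $k$ by orthogonality) is correct, apart from a notational slip ($e_p(\sum_i s_in_i)$ is written outside the sum over $n_1,\ldots,n_j$ although it depends on them), but after that the entire content of the lemma is asserted rather than proved. ``Square-root cancellation predicts $O(p^{(j-1)/2})$'' is a heuristic: for a $(j-1)$-dimensional complete sum one must verify nontrivial cohomological hypotheses before any Deligne--Katz bound applies, and identifying exactly which configurations $(b,s_1,\ldots,s_j)$ are degenerate is precisely the hard part, which your sketch leaves open. Your fallback --- ``cite \cite{fgkm} for the sum $\sum_k e_p(-kb)\prod_i S(a,k+s_i,p)$ directly'' --- does not mesh with your unfolding: \cite[Proposition 3.2]{fgkm} bounds the \emph{one-variable} sums $\sum_{k}\prod_i \hypk(\beta_i k;p)$ with $\beta_i\in\mathrm{PGL}_2(\mathbb{F}_p)$, so to use it one should not unfold at all. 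The paper instead writes $S(a,k+s_i,p)=p^{1/2}\hypk(a(k+s_i);p)$ for $k+s_i\not\equiv 0\pmod p$ (the excluded terms cost only $O(jp^{j/2})$ by Weil), notes that the fractional linear maps $k\mapsto a(k+s_i)$ coincide exactly when the corresponding $s_i$ coincide, and applies that proposition. There is also a genuine subtlety you gloss over: the published FGKM result is for $b=0$, while the lemma claims the strong bound $p^{(j+1)/2}$ whenever $b\neq 0$, \emph{including} when all multiplicities are even; this case needs a modification of the FGKM argument (the paper relies on an unpublished argument of Kowalski for it), so ``checking that their hypotheses reduce precisely to the condition $E(s_1,\ldots,s_j)$'' cannot work as stated --- the condition in the lemma is ``$b=0$ and $E$'', not ``$E$'' alone.

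A second gap concerns the degenerate case itself. In the paper the bound $p^{(j+2)/2}$ when $b=0$ and $E(s_1,\ldots,s_j)$ holds is a one-line consequence of the termwise Weil bound: $\sum_k\prod_i|S(a,k+s_i,p)|\leq p\,(2\sqrt{p})^j$. In your unfolded form the trivial bound is $p\cdot p^{j-1}=p^{j}$, which is worse than $p^{(j+2)/2}$ for $j>2$, so your framework turns the easy case into one that still requires a cancellation argument you never supply. Your discussion of the pair substitution $n_{i'}=-n_i$ is really a (correct) explanation of why the bound cannot be improved in the degenerate case --- note though that the inverse terms \emph{cancel} there too, since $\overline{-n_i}=-\overline{n_i}$, and it is precisely because the whole phase vanishes on that locus that the diagonal contributes $\asymp p^{j/2}$ --- but a sharpness discussion is not needed for the lemma, whereas the upper bound in that case is, and the Weil bound gives it immediately.
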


\begin{proof}
The first part follows directly from the Weil bound 
$$|S(a,k+s_i,p)|\leq 2\sqrt{p}.$$
For the second part we use a result of Fouvry, Ganguly, Kowalski and Michel \cite[Proposition 3.2]{fgkm}.  Let 
$$\hypk(a;p)=\frac{S(a,1,p)}{p^{1/2}}.$$
If $k\not\equiv 0\pmod p$ then 
$$S(a,k,p)=p^{1/2}\hypk(ak;p).$$
We therefore have
$$\sum_{k\pmod p}e_p(-kb)\prod_{i=1}^jS(a,k+s_i,p)=p^{j/2}\twosum{k\pmod p}{k+s_i\not\equiv 0\pmod p}e_p(-kb)\prod_{i=1}^j\hypk(a(k+s_i));p)+O(jp^{j/2}),$$
where the error comes from the terms with $k+s_i\equiv0\pmod p$, for
which we can use the Weil bound.
The maps $k\mapsto a(k+s_i)$ are in $\mathrm{PGL}_2(\mathbb F_p)$.  If $s_i\ne s_j$ then $k\mapsto a(k+s_i),k\mapsto a(k+s_j)$ are different.

If $b=0$ then \cite[Proposition 3.2]{fgkm} states that if $\beta_1,\ldots,\beta_j\in \mathrm{PGL}_2(\mathbb F_p)$ then, provided the multiplicities of the $\beta_i$ are not all even, we have 
$$\twosum{k\pmod p}{\beta_i k\ne 0,\infty}e_p(-kb)\prod_{i=1}^j \hypk(\beta_i k;p)\ll_j p^{1/2}.$$
If $b\ne 0$ then the same bound can be shown to hold for all choices of $\beta_i$.  The proof involves some small modifications to the argument from \cite{fgkm}, detailed by Kowalski in a private communication.   

In the case that the shifts $s_i$ are all distinct this lemma may also be deduced from a result of Fouvry, Michel, Rivat and S{\'a}rk{\"o}zy \cite[Lemma 2.1]{fmrs}.
\end{proof}

We use this in conjunction with the following combinatorial result.

\begin{lem}
Let $p\geq 3$ be prime and let $h_1,\ldots,h_l\in \mathbb F_p$.  Suppose that the $2^l$ sums 
$$\sum_{i\in I}h_i\text{ for }I\subseteq \{1,\ldots,l\}$$
form a list of elements all of whose entries have even multiplicities.  At least one of the $h_i$ must then be $0$.  
\end{lem}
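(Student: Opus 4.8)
The plan is to argue by induction on $l$, converting the even-multiplicity hypothesis into a statement about the parity of the number of subsets producing a given sum. For $0\le m\le l$ I would write $N_m(v)=\#\{I\subseteq\{1,\ldots,m\}:\sum_{i\in I}h_i=v\}$, so that the hypothesis of the lemma is exactly that $N_l(v)$ is even for every $v\in\mathbb{F}_p$; the goal is then to force some $h_i=0$. The base case $l=1$ is immediate: the two subset sums are $0$ and $h_1$, and if $h_1\ne 0$ these are distinct values each occurring once, so not every multiplicity is even. Hence $N_1(v)$ even for all $v$ already gives $h_1=0$.

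For the inductive step I would split the subsets of $\{1,\ldots,l\}$ according to whether they contain the index $l$. This gives the convolution identity $N_l(v)=N_{l-1}(v)+N_{l-1}(v-h_l)$, since the subsets omitting $l$ contribute $N_{l-1}(v)$ and those containing $l$ contribute $N_{l-1}(v-h_l)$. The hypothesis $N_l(v)\equiv 0\pmod 2$ therefore reads $N_{l-1}(v)\equiv N_{l-1}(v-h_l)\pmod 2$ for all $v$; that is, the $\mathbb{F}_2$-valued function $v\mapsto N_{l-1}(v)\bmod 2$ is invariant under translation by $h_l$. If $h_l=0$ we are done, so I may assume $h_l\ne 0$. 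Since $p$ is prime, the translations by multiples of $h_l$ run through all of $\mathbb{F}_p$, so this function must be a single constant $c$.

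The crux — and the step I expect to be the main obstacle, since it is the only place where the hypothesis $p\ge 3$ is genuinely used — is the resulting parity count. Summing the constant value over all residues,
$$2^{l-1}=\sum_{v\in\mathbb{F}_p}N_{l-1}(v)\equiv pc\equiv c\pmod 2,$$
where the last congruence uses that $p$ is odd. For $l\ge 2$ the left-hand side is even, forcing $c=0$, i.e. $N_{l-1}(v)$ is even for every $v$. Applying the inductive hypothesis to $h_1,\ldots,h_{l-1}$ then produces some $h_i=0$ with $i\le l-1$, which completes the induction. (One should note that this argument genuinely fails for $p=2$, as it must: taking $h_1=h_2=1$ gives subset sums $0,1,1,0$ with all multiplicities even and no $h_i=0$, and indeed $pc\equiv c\pmod 2$ collapses when $p$ is even.)

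An equivalent and perhaps slicker route, which I would mention as an alternative, is to encode the even-multiplicity condition as the vanishing of $\prod_{i=1}^l(1+X^{h_i})$ in the group algebra $\mathbb{F}_2[X]/(X^p-1)$, since the coefficient of $X^v$ in this product is exactly $N_l(v)\bmod 2$. Because $p$ is odd, $X^p-1$ is separable over $\mathbb{F}_2$, so one may evaluate at a primitive $p$-th root of unity $\zeta$ in $\overline{\mathbb{F}_2}$; working in the field $\mathbb{F}_2(\zeta)$ the product $\prod_i(1+\zeta^{h_i})$ vanishes, forcing some factor $1+\zeta^{h_i}=0$, hence $\zeta^{h_i}=1$, hence $h_i\equiv 0\pmod p$. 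Here too the essential use of $p\ge 3$ is the existence of a primitive $p$-th root of unity in characteristic $2$.
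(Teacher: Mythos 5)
Both of your arguments are correct, but your primary argument is genuinely different from the paper's, while your ``alternative'' route is essentially the paper's proof transplanted into characteristic $2$. The paper sets $\omega=e_p(1)$ and considers the algebraic integer $\alpha=\prod_{i=1}^l(1+\omega^{h_i})=\sum_{I}\omega^{\sum_{i\in I}h_i}$: the even-multiplicity hypothesis makes $\alpha$ an even integer combination of powers of $\omega$, so $2\mid N_{\Q(\omega)/\Q}(\alpha)$, and this contradicts the multiplicativity of the norm together with the classical evaluation $N_{\Q(\omega)/\Q}(1+\omega^{h_i})=1$ for $h_i\not\equiv 0\pmod p$, which is where $p\geq 3$ enters. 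Your group-algebra variant replaces this norm-parity detection by outright reduction mod $2$ --- vanishing of $\prod_i(1+X^{h_i})$ in $\mathbb{F}_2[X]/(X^p-1)$, then evaluation at a primitive $p$-th root of unity in $\overline{\mathbb{F}_2}$, which exists precisely because $p$ is odd --- and is, if anything, cleaner: it rests on the same generating-function identity but needs no algebraic number theory, only that a product vanishing in a field has a vanishing factor. Your induction is more elementary still: the recursion $N_l(v)=N_{l-1}(v)+N_{l-1}(v-h_l)$, the observation that invariance under translation by a nonzero $h_l$ forces $N_{l-1}\bmod 2$ to be constant (primality of $p$ used here), and the parity count $2^{l-1}\equiv pc\equiv c\pmod 2$ (oddness of $p$ used here) push the even-multiplicity hypothesis down to $l-1$ variables; it avoids roots of unity entirely, at the cost of being somewhat longer. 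All three proofs break at $p=2$ exactly where they should, as your counterexample $h_1=h_2=1$ confirms.
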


\begin{proof}
Let $\omega=e_p(1)$ and consider the algebraic integer $\alpha\in \Z[\omega]$ given by 
$$\alpha=\prod_{i=1}^l(1+\omega^{h_i})=\sum_{I\subseteq \{1,\ldots,l\}}\omega^{\sum_{i\in I}h_i}.$$
Our assumption that the $\sum_{i\in I}h_i$ all have even multiplicities therefore implies that $\alpha$ is a sum of even multiples of powers of $\omega$.  In particular $2|N_{\Q(\omega)/\Q}(\alpha)$.

If $h_i\not\equiv 0\pmod p$ then since $p\geq 3$ it is well known that 
$$N_{\Q(\omega)/\Q}(1+\omega^{h_i})=1.$$
We therefore have a contradiction unless at least one of the $h_i$ is $0$.
\end{proof}

Combining the last two lemmas we immediately deduce the following.  

\begin{lem}
Let $p$ be prime, $(a,p)=1$ and let $h_1,\ldots,h_l,b$ be integers.  We have 
$$\sum_{k\pmod p}e_p(-kb)\prod_{I\subseteq \{1,\ldots,l\}}S\left(a,k+\sum_{i\in I}h_i,p\right)\ll_l p^{\frac{2^l+1}{2}}(p,b,\prod h_i)^{1/2}.$$
\end{lem}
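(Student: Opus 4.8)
The plan is to read the result off from the two preceding lemmas by a short case analysis governed by the value of $(p,b,\prod_i h_i)$. First I would note that the product runs over all $2^l$ subsets $I\subseteq\{1,\ldots,l\}$, so the sum is exactly of the shape treated in Lemma~\ref{completeexp}, with $j=2^l$ Kloosterman factors and shifts $s_I=\sum_{i\in I}h_i$ (which, since each factor $S(a,k+s_I,p)$ depends only on $s_I$ modulo $p$, are naturally regarded as elements of $\mathbb{F}_p$). Because $p$ is prime, the quantity $(p,b,\prod_i h_i)$ equals either $1$ or $p$, and these two regimes will correspond to the two cases in Lemma~\ref{completeexp}.

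When $(p,b,\prod_i h_i)=p$, that is $p\mid b$ and $p\mid h_i$ for some $i$, the target bound is $p^{(2^l+2)/2}$, which is precisely the first case of Lemma~\ref{completeexp}; equivalently it follows from the trivial Weil estimate $|S(a,\cdot,p)|\le 2\sqrt{p}$ applied to each of the $2^l$ factors and summed over the $p$ residues $k$. The interesting regime is $(p,b,\prod_i h_i)=1$, where the target drops to $p^{(2^l+1)/2}$, matching the ``otherwise'' case of Lemma~\ref{completeexp}. Here I must verify that we genuinely land in that case rather than the even-multiplicity case. If $p\nmid b$ this is automatic, since then $b\not\equiv 0\pmod{p}$. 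The remaining possibility is $p\mid b$, in which case the assumption $(p,b,\prod_i h_i)=1$ forces $p\nmid h_i$ for every $i$.

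This last subcase is where the combinatorial lemma does the essential work, and I expect it to be the only genuinely nontrivial point. With $b\equiv 0\pmod{p}$ we would fall into the larger bound $p^{(2^l+2)/2}$ exactly when the shifts $s_I$ all occur with even multiplicity modulo $p$; but since none of the $h_i$ vanishes in $\mathbb{F}_p$, the combinatorial lemma guarantees that the list $\{s_I\}_{I}$ does \emph{not} have all even multiplicities. Thus the property $E(s_I)$ fails, and Lemma~\ref{completeexp} delivers $p^{(2^l+1)/2}$ as required. A final remark: the combinatorial lemma assumes $p\geq 3$, so the prime $p=2$ must be handled separately, but there the whole sum is $\ll_l 1$ by the trivial bound and is absorbed into the implied constant. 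Assembling the cases gives the stated estimate, with the factor $(p,b,\prod_i h_i)^{1/2}$ interpolating exactly between the two exponents.
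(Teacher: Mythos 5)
Your proof is correct and is essentially the paper's own argument: the paper simply states that the lemma follows by ``combining the last two lemmas,'' and your case analysis on $(p,b,\prod h_i)\in\{1,p\}$, using the combinatorial lemma to rule out the even-multiplicity case when $p\mid b$ but $p\nmid\prod h_i$, is exactly the intended deduction. Your explicit treatment of $p=2$ (which the combinatorial lemma excludes) is a detail the paper leaves unaddressed, and you resolve it correctly via the trivial bound.
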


Next we generalise this to squarefree moduli.

\begin{lem}
Let $q$ be squarefree, $(a,q)=1$ and let $h_1,\ldots,h_l,b$ be integers.  For any $\epsilon>0$ we have 
$$\sum_{k\pmod q}e_q(-kb)\prod_{I\subseteq \{1,\ldots,l\}}S\left(a,k+\sum_{i\in I}h_i,q\right)\ll_{\epsilon,l} q^{\frac{2^l+1}{2}+\epsilon}(q,b,\prod h_i)^{1/2}.$$
\end{lem}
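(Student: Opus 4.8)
The plan is to deduce this from the prime-modulus case established in the preceding lemma by means of the Chinese Remainder Theorem, exploiting the assumption that $q$ is squarefree. Two multiplicativity facts drive the argument. First, writing $q=\prod_{p\mid q}p$ and letting $\overline{(q/p)}$ denote the inverse of $q/p$ modulo $p$, partial fractions give the factorisation $e_q(m)=\prod_{p\mid q}e_p(m\,\overline{(q/p)})$ of the additive character. Second, Kloosterman sums are twisted-multiplicative: for $q=q'q''$ with $(q',q'')=1$ one has $S(a,k,q)=S(\overline{q''}a,\overline{q''}k,q')\,S(\overline{q'}a,\overline{q'}k,q'')$, and iterating over the prime factors of $q$ yields $S(a,k,q)=\prod_{p\mid q}S(\overline{(q/p)}a,\overline{(q/p)}k,p)$.

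First I would substitute $k\leftrightarrow(k_p)_{p\mid q}$ by CRT and apply these two factorisations to the summand. The decisive simplification comes from the change of variable $\kappa_p=\overline{(q/p)}k_p$, which is a bijection of $\Z/p\Z$: it turns $e_p(-k_pb\,\overline{(q/p)})$ into $e_p(-\kappa_pb)$ and, writing $a_p=\overline{(q/p)}a$ and $h_{i,p}=\overline{(q/p)}h_i$, turns $S(\overline{(q/p)}a,\overline{(q/p)}(k_p+\sum_{i\in I}h_i),p)$ into $S(a_p,\kappa_p+\sum_{i\in I}h_{i,p},p)$. Since the sum over $k\pmod q$ decouples into independent sums over the $\kappa_p$, the whole sum factors as
$$\sum_{k\pmod q}e_q(-kb)\prod_{I}S\!\left(a,k+\sum_{i\in I}h_i,q\right)=\prod_{p\mid q}\left(\sum_{\kappa\pmod p}e_p(-\kappa b)\prod_{I}S\!\left(a_p,\kappa+\sum_{i\in I}h_{i,p},p\right)\right).$$

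Each factor is exactly the sum bounded in the preceding lemma, so I would apply it to obtain $\ll_l p^{(2^l+1)/2}(p,b,\prod_i h_{i,p})^{1/2}$ for each $p$. Because $\overline{(q/p)}$ is a unit modulo $p$, we have $p\mid\prod_i h_{i,p}$ if and only if $p\mid\prod_i h_i$, so $(p,b,\prod_i h_{i,p})=(p,b,\prod_i h_i)$. Multiplying the prime bounds together and using that for squarefree $q$ one has $\prod_{p\mid q}p^{(2^l+1)/2}=q^{(2^l+1)/2}$ together with $\prod_{p\mid q}(p,b,\prod_i h_i)=(q,b,\prod_i h_i)$ gives the stated power of $q$ and the stated gcd factor.

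The only genuine point to watch is the accumulation of the implied constants. The constant $\ll_l$ from each prime factor appears once per prime dividing $q$, producing a factor of size $C_l^{\omega(q)}$; since $\omega(q)\ll\log q/\log\log q$ this is absorbed into $q^\epsilon$, which accounts for the $\epsilon$ in the statement. I expect the main bookkeeping obstacle to be keeping the twists straight through the twisted-multiplicativity step—specifically, verifying that the change of variable $\kappa_p=\overline{(q/p)}k_p$ simultaneously normalises the additive character to $e_p(-\kappa b)$ with the \emph{original} $b$, normalises the Kloosterman factors to the exact form required by the prime lemma, and leaves the gcd condition invariant under the unit twist.
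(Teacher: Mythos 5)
Your proposal is correct and follows essentially the same route as the paper: both factor the sum over the primes dividing $q$ via CRT and the twisted multiplicativity of Kloosterman sums, apply the prime-modulus lemma to each factor after observing that the unit twists by $\overline{(q/p)}$ preserve both the subset-sum shift structure and the gcd $(p,b,\prod h_i)$, and absorb the accumulated constant $C_l^{\omega(q)}$ into $q^\epsilon$. The only cosmetic difference is that you normalise each local factor explicitly by the change of variable $\kappa_p=\overline{(q/p)}k_p$, whereas the paper factors through a two-factor decomposition and simply remarks that the altered $b,h_i$ differ from the originals by multiplicative factors coprime to $q$.
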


\begin{proof}
The sum has a multiplicative property.  Specifically, if $(q_0,q_1)=1$ then 
\begin{eqnarray*}
\lefteqn{\sum_{k\pmod {q_0q_1}}e_{q_0q_1}(-kb)\prod_{I\subseteq \{1,\ldots,l\}}S\left(a,k+\sum_{i\in I}h_i,q_0q_1\right)}\\
&=&\twosum{k_0\pmod{q_0}}{k_1\pmod{q_1}}e_{q_0}(-bk_0\overline{q_1})e_{q_1}(-bk_1\overline{q_0})\prod_{I\subseteq\{1,\ldots,l\}}S(a,k_0q_1\overline{q_1}+k_1q_0\overline{q_0}+\sum_{i\in I}h_i,q_0q_1)\\
&=&\sum_{k_0\pmod{q_0}}e_{q_0}(-bk_0\overline{q_1})\prod_{I\subseteq\{1,\ldots,l\}}S\left(a\overline{q_1},(k_0q_1\overline{q_1}+\sum_{i\in I}h_i)\overline{q_1},q_0\right)\\
&&\hspace{2cm}\times \sum_{k_1\pmod{q_1}}e_{q_1}(-bk_1\overline{q_0})\prod_{I\subseteq\{1,\ldots,l\}}S\left(a\overline{q_0},(k_1q_0\overline{q_0}+\sum_{i\in I}h_i)\overline{q_0},q_1\right).\\
\end{eqnarray*}
It follows that if $q$ is squarefree we may factorise the sum as a product over $p|q$  of sums to modulus $p$.  Each sum may then be estimated using the last lemma.  The integers $b,h_i$ occurring in the factors are different to those in our sum to modulus $q$, however the changes are simply by multiplicative factors coprime to $q$.  It follows that each factor may be bounded by 
$$C_lp^{\frac{2^l+1}{2}}(p,b,\prod h_i)^{1/2}$$ 
for some constant $C_l$, whence our sum is bounded by 
$$\prod_{p|q}\left(C_lp^{\frac{2^l+1}{2}}(p,b,\prod h_i)^{1/2}\right)\ll_{\epsilon,l} q^{\frac{2^l+1}{2}+\epsilon}(q,b,\prod h_i)^{1/2}.$$
\end{proof}

We now return to our sum 
\begin{eqnarray*}
T(h_1,\ldots,h_l)&=&\sum_{k\in J(h_1,\ldots,h_l)}\prod_{I\subseteq \{1,\ldots,l\}}S\left(a',k+\sum_{i\in I}q_ih_i,q_0\right)\\
&=&\sum_{j\pmod{q_0}}\twosum{k\in J(h_1,\ldots,h_l)}{k\equiv j\pmod{q_0}}\prod_{I\subseteq \{1,\ldots,l\}}S\left(a',j+\sum_{i\in I}q_ih_i,q_0\right)\\
&=&\frac{1}{q_0}\sum_{b\pmod{q_0}}h(b)\sum_{j\pmod{q_0}}e_{q_0}(bj)\prod_{I\subseteq \{1,\ldots,l\}}S\left(a',j+\sum_{i\in I}q_ih_i,q_0\right),\\
\end{eqnarray*}
with 
$$h(b)=\sum_{k\in J(h_1,\ldots,h_l)}e_{q_0}(-bk).$$
We have the standard estimate 
$$h(b)\ll \min\left(K,\frac{1}{\|b/q_0\|}\right).$$
Since $(q_i,q_0)=1$ for $i\ne 0$ we can write
$$(q_0,b,\prod q_ih_i)=(q_0,b,\prod h_i).$$
We may therefore use the last lemma to obtain 
$$T(h_1,\ldots,h_l)\ll_{\epsilon,l}q_0^{\frac{2^l+1}{2}+\epsilon}\cdot\frac{1}{q_0}\sum_{b\pmod{q_0}}\min\left(K,\frac{1}{\|b/q_0\|}\right)(q_0,b,\prod h_i)^{1/2}.$$
Finally we estimate 
\begin{eqnarray*}
\lefteqn{\frac{1}{q_0}\sum_{b\pmod{q_0}}\min\left(K,\frac{1}{\|b/q_0\|}\right)(q_0,b,\prod h_i)^{1/2}}\\
&\ll&\frac{K}{q_0}(q_0,\prod h_i)^{1/2}+\sum_{0<b\leq q_0/2}\frac{1}{b}(q_0,b,\prod h_i)^{1/2}\\
&=&\frac{K}{q_0}(q_0,\prod h_i)^{1/2}+\sum_{d|(q_0,\prod h_i)}d^{1/2}\twosum{0<b\leq q_0/2}{(q_0,b,\prod h_i)=d}\frac{1}{b}\\
&\ll_\epsilon&\frac{K}{q_0}(q_0,\prod h_i)^{1/2}+q^\epsilon\\
&\ll_\epsilon&q^\epsilon(q_0,\prod h_i)^{1/2}(\frac{K}{q_0}+1)\\
\end{eqnarray*}
so we conclude that 
$$T(h_1,\ldots,h_l)\ll_{\epsilon,l}q^\epsilon(\frac{K}{q_0}+1)q_0^{\frac{2^l+1}{2}}(q_0,\prod h_i)^{1/2}.$$

\subsection{Conclusion}

Inserting the above bound for $T(h_1,\ldots,h_l)$ into the result of Lemma \ref{lem:ldiff} we obtain 
\begin{eqnarray*}
T^{2^l}&\ll_{\epsilon,l}&q^\epsilon\left(q^{2^{l-1}}\sum_{j=1}^lK^{2^l-2^{l-j}}q_{l-j+1}^{2^{l-j}}\right.\\
&&\hspace{1cm}+\left.(\frac{K}{q_0}+1)K^{2^l-l-1}\left(q/q_0\right)^{2^{l-1}+1}q_0^{\frac{2^l+1}{2}}\sum_{0<|h_1|\leq
  K/q_1}\ldots\sum_{0<|h_l|\leq K/q_l}(q_0,\prod h_i)^{1/2}\right).\\  
\end{eqnarray*}
We have 
\begin{eqnarray*}
\sum_{0<|h_1|\leq K/q_1}\ldots\sum_{0<|h_l|\leq K/q_l}(q_0,\prod h_i)^{1/2}&\leq&\sum_{0<|h|\leq K^l/(q_1\ldots q_l)}\tau_l(h)(q_0,h)^{1/2}\\
&\ll_{\epsilon,l}&q^\epsilon\sum_{0<|h|\leq K^l/(q_1\ldots q_l)}(q_0,h)^{1/2}\\
&\ll_\epsilon&q^\epsilon K^l/(q_1\ldots q_l)\\
&=&q^{-1+\epsilon}K^lq_0.\\
\end{eqnarray*}
We conclude that 
$$T^{2^l}\ll_{\epsilon,l}q^{2^{l-1}+\epsilon}\left(\sum_{j=1}^lK^{2^l-2^{l-j}}q_{l-j+1}^{2^{l-j}}+(\frac{K}{q_0}+1)K^{2^l-1}q_0^{1/2}\right).$$
Lemma \ref{lem:Sr} now follows, on recalling that the sums $S(r)$ were a special case of the sum $T$, and therefore Theorem \ref{shortkloost} is proven.

\addcontentsline{toc}{section}{References} 
\bibliographystyle{plain}
\bibliography{../biblio}

\begin{thebibliography}{1}

\bibitem{bhbshpar}
W.~D. Banks, D.~R. Heath-Brown, and I.~E. Shparlinski.
\newblock On the average value of divisor sums in arithmetic progressions.
\newblock {\em Int. Math. Res. Not.}, (1):1--25, 2005.

\bibitem{blomer}
V.~Blomer.
\newblock The average value of divisor sums in arithmetic progressions.
\newblock {\em Q. J. Math.}, 59(3):275--286, 2008.

\bibitem{fouvrytitchmarsh}
{\'E}.~Fouvry.
\newblock Sur le probl\`eme des diviseurs de {T}itchmarsh.
\newblock {\em J. Reine Angew. Math.}, 357:51--76, 1985.

\bibitem{fgkm}
{\'E}.~Fouvry, S.~Ganguly, E.~Kowalski, and P.~Michel.
\newblock Gaussian distribution for the divisor function and hecke eigenvalues
  in arithmetic progressions.
\newblock arXiv:1301.0214.

\bibitem{fouiwdivisor}
{\'E}.~Fouvry and H.~Iwaniec.
\newblock The divisor function over arithmetic progressions.
\newblock {\em Acta Arith.}, 61(3):271--287, 1992.
\newblock With an appendix by Nicholas Katz.

\bibitem{fmrs}
{\'E}.~Fouvry, P.~Michel, J.~Rivat, and A.~S{\'a}rk{\"o}zy.
\newblock On the pseudorandomness of the signs of {K}loosterman sums.
\newblock {\em J. Aust. Math. Soc.}, 77(3):425--436, 2004.

\bibitem{rhbx32}
D.~R. Heath-Brown.
\newblock The largest prime factor of {$X^3+2$}.
\newblock {\em Proc. London Math. Soc. (3)}, 82(3):554--596, 2001.

\bibitem{rhbcw}
D.~R. Heath-Brown.
\newblock Bounds for the cubic {W}eyl sum.
\newblock {\em Zap. Nauchn. Sem. S.-Peterburg. Otdel. Mat. Inst. Steklov.
  (POMI)}, 377(Issledovaniya po Teorii Chisel. 10):199--216, 244--245, 2010.

\end{thebibliography}

\bigskip
\bigskip

Mathematical Institute,

University of Oxford,

Andrew Wiles Building, 

Radcliffe Observatory Quarter, 

Woodstock Road, 

Oxford 

OX2 6GG 

UK
\bigskip

{\tt irving@maths.ox.ac.uk}

\end{document}